\newtheorem{theorem}{Theorem}
\newtheorem{lemma}[theorem]{Lemma}
\newtheorem{proposition}[theorem]{Proposition}
\newtheorem{definition}[theorem]{Definition}
\newtheorem{corollary}[theorem]{Corollary}
\newtheorem{assumptions}{Standing Assumption}
\numberwithin{theorem}{section}
\numberwithin{equation}{section}
\newcommand{\R}{{\mathbb{R}}}
\newcommand{\rplus}{{\mathbb{R}^{+}}}
\newcommand{\rpluscl}{\overline{\mathbb{R}}^{+}}
\newcommand{\C}{\mathbb{C}}
\newcommand{\dif}[0]{\ensuremath{\,\mathrm{d}}}
\newcommand{\clos}[1]{\overline{#1}}
\renewcommand{\Re}{{\rm Re\,}}
\newcommand{\AB}     {{A\& B}}                  
\newcommand{\CD}     {{C\& D}}                  
\newcommand{\Xscr} {{\mathcal X}}
\newcommand{\Yscr} {{\mathcal Y}}
\newcommand{\Uscr} {{\mathcal U}}
\newcommand{\Zscr} {{\mathcal Z}}
\newcommand{\Escr} {{\mathcal E}}
\newcommand{\bbm}[1]{\left[\begin{matrix} #1 \end{matrix}\right]}
\newcommand{\sbm}[1]{\left[\begin{smallmatrix} #1
             \end{smallmatrix}\right]}
\newcommand{\SysNode}{\bbm{\AB \cr \CD}}
\newcommand{\SmallSysNode}{\sbm{\AB \cr \CD}}
\newcommand{\DisplayNote}[1]{\hspace{.5cm}( {\bf{#1}} ) \hspace{.5cm} }
\newcommand{\OmitThis}[1]{}
\newcommand{\ResearchNote}[1]{}
\newcommand{\ProofNote}[1]{}
\newcommand{\abs}[1]{\left \vert #1 \right \vert}
\newcommand{\Dom}[1]{{\rm dom}\left (#1 \right )}
\newcommand{\Null}[1]{{\rm ker} \left (#1 \right )}
\newcommand{\BLO}{\mathcal L}
\newcommand{\norm}[1]{\|{#1}\|}
\newcommand{\rst}[1]{\big{|}_{#1}}
\newcommand{\bg}{\mathbf \gamma}
\newcommand{\bl}{\mathbf l}
\newcommand{\bt}{\mathbf t}
\newcommand{\bn}{\mathbf n}
\newcommand{\bb}{\mathbf b}
\newcommand{\bnu}{\mathbf \nu}
\newcommand{\br}{\mathbf r}
\newcommand{\Dscr} {{\mathcal D}}
\begin{document}
\title{\emph{A posteriori} error estimates \\ for Webster's equation
  \\ in wave propagation} \author{Teemu Lukkari and Jarmo Malinen}

\bibliographystyle{plain}

\maketitle

\begin{abstract}
  We consider a generalised Webster's equation for describing wave
  propagation in curved tubular structures such as variable diameter
  acoustic wave guides. Webster's equation in generalised form has
  been rigorously derived in a previous article starting from the wave
  equation, and it approximates cross-sectional averages of the
  propagating wave. Here, the approximation error is estimated by an
  \emph{a posteriori} technique.
\end{abstract}

\noindent {\bf Keywords.} Wave propagation, tubular domain, Webster's model, a posteriori error analysis.

\noindent {\bf AMS classification.} Primary  37L05. Secondary 35L05, 35L20, 47N70, 93C20.


\section{\label{IntroSec} Introduction}

We study wave propagation in a narrow but long, tubular domain $\Omega
\subset \R^3$ of finite length whose cross-sections are circular and
of varying area.  In this case, the wave equation in the domain
$\Omega$, i.e., the topmost equation in \eqref{IntroWaveEq} below, has
a classical approximation depending on a single spatial variable in
the long direction of tubular $\Omega$. The approximation is known as
\emph{Webster's equation}, which is given in generalised form as the
topmost equation in \eqref{IntroWebstersEq} below. The geometry of
$\Omega$ is represented by the \emph{area function} $A(\cdot)$ whose
values are cross-sectional areas of $\Omega$. The solution of
Webster's equation approximates cross-sectional averages of the
solution to the wave equation as shown in \cite{L-M:WECAD}.  The
purpose of this article is to estimate the approximation error by an
\emph{a posteriori} method, using the passivity and well-posedness
estimates given in \cite{A-L-M:AWGIDDS} as well as analytic tools
presented in \cite[Section~5]{L-M:WECAD}.

Webster's original work \cite{AW:AITHP} was published in 1919, but the
model itself has a history spanning over 200 years and starting from
the works of D.~Bernoulli, Euler, and Lagrange. Early work concerning
Webster's equation can be found in
\cite{EE:CSWHE,VS:GPWHT,VS:NFH,AW:AITHP}, and a selection of
contemporary approaches is provided by
\cite{L-L:AMAEMAI,L-L:AMAEMAII,N-T:APDVCS,SR:STSVCALDF,SR:WHER,R-E:NCBMSFESSPLFD}
and, in particular, \cite{R-H:IA}.  The derivation of Webster's
equation in \cite{SR:WHER} (see also \cite{AN:PM}) is based on
asymptotic expansions that, however, does not give estimates for the
approximation error. The resonance structure of Webster's equation is
obtained from the associated eigenvalue problem which resembles the
characterisation for the asymptotic spectra of Neumann--Laplacian on
shrinking tubular domains in \cite{KZ:CSMSCG,RS:STRIP}.  This is an
example of dimensional reduction that is also the basis of shell and
plate models; see, e.g., \cite{A-A-F-M:DJPMVM} where the treatment is
for the stationary problems, only.  Similarly, strings have been
considered in \cite{A-B-P:VDSEES} where the tool for dimensional
reduction is the $\Gamma$-convergence of energy functionals as opposed
to starting from a partial differential equation. In our approach, the
dimensional reduction is based on the wave equation, and it is carried
out by averaging over those degrees of freedom that are not part of
Webster's equation; see \cite{L-M:WECAD}.

Our interest in Webster's equation stems from the fact that it
provides a model for the acoustics of the human vocal tract as it
appears during a vowel utterance.  Webster's equation can be used as a
part of a dynamical computational physics model of speech as discussed
in
\cite{A-A-H-J-K-K-L-M-M-P-S-V:LSDASMRIS,C-K:TVINAS,GF:ATSP,H-L-M-P:WFWE}
and the theses \cite{A-A:LOGMNFMCAL,T-M:MVP}.  Further applications of
Webster's equation include modelling of water waves in tapered
channels, acoustic design of exhaust pipes and jet engines for
controlling noise, vibration, and performance as well as construction
of instruments such as loudspeakers and horns
\cite[p.~402--405]{H-S:FDHL}.

The results of this article describe the interplay between two kinds
of models for acoustic waveguides; i.e., wave equation and Webster's
equation. The first of the models is suitable for high precision, and
the latter is computationally more efficient but lacks, e.g.,
transversal wave propagation because of simplifications. The two
models are related to each other by the common underlying geometry of
the waveguide.  The waveguide geometry is originally defined by the
tubular domain $\Omega \subset \R^3$ that has the following
properties.  The centreline of the tube is a smooth planar curve $\bg$
of unit length and with vanishing torsion, parametrised by its arc
length $s \in [0,1]$. We assume that the cross-section of $\Omega$,
perpendicular to the tangent of $\bg$ at the point $\bg(s)$, is the
circular disk $\Gamma(s)$ with centre point $\bg(s)$. The radius of
$\Gamma(s)$ is denoted by $R(s)$ with area $A(s)$. The boundary
$\partial \Omega$ of $\Omega$ consists of the \emph{ends} of the tube,
$\Gamma(0)$ and $\Gamma(1)$, and the \emph{wall} $\Gamma := \cup_{s
  \in [0,1]}{\partial \Gamma(s)}$ of the tube.

With this notation, acoustic wave propagation in $\Omega$ can be
modelled by the wave equation, written for the \emph{(perturbation)
  velocity potential} $\phi:  \rpluscl \times \Omega \to \R$
\begin{equation}\label{IntroWaveEq}
\begin{cases}
  &  \phi_{tt}(t,\br) = c^2 \Delta \phi(t,\br) \quad
  \text{ for } \br \in \Omega \text{ and } t \in \rplus, \\
  &   c \frac{\partial \phi}{\partial
    \bnu}(t,\br) + \phi_t(t,\br) = 2 \sqrt{\tfrac{c}{\rho A(0)}} \, u(t,\br) \quad
  \text{ for } \br \in \Gamma(0) \text{ and }  t \in \rplus, \\
  &  \phi(t,\br)  = 0 \quad \text{ for } \br \in \Gamma(1) \text{ and }
  t \in \rplus, \\
  &  \frac{\partial \phi}{\partial \bnu}(t,\br) + \alpha \phi_t (t,\br)   = 0 
  \quad \text{ for } \br \in \Gamma, \text{ and }
  t \in \rplus, \text{ and } \\
  & \phi(0,\br)  = \phi_0(\br), \quad \rho \phi_t(0,\br) = p_0(\br) \quad \text{ for
  } \br \in \Omega
\end{cases}
\end{equation}
with the observation defined by
\begin{equation}\label{IntroWaveObs}
  c \frac{\partial \phi}{\partial
    \bnu} (t,\br) - \phi_t(t,\br)  =  2 \sqrt{\tfrac{c}{\rho A(0)}} \, 
  y(t,\br)  \quad
  \text{ for }  \br \in \Gamma(0) \text{ and }  t \in \rplus,
\end{equation}
where $\rplus = (0,\infty)$, $\rpluscl = [0,\infty)$, $\bnu$ denotes
  the unit normal vector on $\partial \Omega$, $c$ is the sound speed,
  $\rho$ is the density of the medium, and $\alpha \geq 0$ is a
  parameter associated to boundary dissipation.  The Dirichlet
  condition on $\Gamma(1)$ represents an open end, and the Neumann
  condition on $\Gamma$ represents a hard reflective surface.  The
  control (i.e., the input) $u(t,\br)$ and the observation (i.e., the
  output) $y(t,\br)$ are given in \emph{scattering form} in
  \eqref{IntroWaveEq} where the physical dimension of both signals is
  power per unit area.

It was shown in \cite[Theorem~5.1 and Corollary~5.2]{A-L-M:AWGIDDS}
that for \linebreak $u \in C^2(\rpluscl;L^2(\Gamma(0)))$ and the initial state
$\sbm{\phi_0 \\ p_0}$ compatible with the input $u$ (as detailed below
in Assumption~\eqref{MainTheorem2CompatibilityAss} of
Theorem~\ref{MainTheorem2}), there exists a unique classical solution
$\phi$ of \eqref{IntroWaveEq} satisfying
\begin{equation} \label{WaveEqSolutionRegular}
\begin{aligned}
    & \phi \in C^1(\rpluscl; H^1(\Omega)) \cap C^2(\rpluscl;
  L^2(\Omega)), \\ & \nabla \phi \in C^1(\rpluscl;L^2(\Omega;\R^3)),
  \text{ and } \Delta \phi \in C(\rpluscl;L^2(\Omega)).
  \end{aligned}
\end{equation}
Then the function $y$ given by \eqref{IntroWaveObs} satisfies $y \in
C(\rpluscl;L^2(\Gamma(0)))$.  For the rest of this article, $u$,
$\phi$, and $y$ always denote these functions.

Following \cite{L-M:WECAD}, the generalised Webster's equation for
the velocity potential $\psi:\rpluscl \times [0,1]  \to \R$ is given by
\begin{equation} \label{IntroWebstersEq}
  \begin{cases}
    & \psi_{tt} = \frac{c(s)^{2}}{A(s)} \frac{\partial}{\partial s}
    \left ( A(s) \frac{\partial \psi }{\partial s} \right ) 
    - \frac{2 \pi \alpha W(s) c(s)^{2} }{A(s)} \psi_t
\\   & \hfill \text{ for }   s \in (0,1) \text{ and } t \in \rplus, \\
    & - c \psi_s(t,0) + \psi_t(t,0) = 2 \sqrt{\frac{c}{\rho
        A(0)}} \, \tilde u(t) \quad
    \text{ for } t \in \rplus,  \\
    & \psi(t,1) = 0 \quad  \text{ for } t \in \rplus, \quad \text{ and } \\
    & \psi(0,s) = \psi_0(s), \quad \rho \psi_t(0,s) = \pi_0(s) \quad
    \text{ for } s \in (0,1),
\end{cases}
\end{equation}
and the observation $\tilde y$ is defined by
\begin{equation}\label{IntroWebstersEqObs}
   - c \psi_s(t,0) - \psi_t(t,0) = 2 \sqrt{\frac{c}{\rho
        A(0)}} \, \tilde y(t) \quad
    \text{ for } t \in \rplus.
\end{equation}
The constants $c$, $\rho$, $\alpha$ are same as in
\eqref{IntroWaveEq}, and $A(s)$ is the area of the cross-section
$\Gamma(s)$. Note that the dissipative boundary condition in
\eqref{IntroWaveEq} gives rise to a dissipation term in
\eqref{IntroWebstersEq}.  The \emph{stretching factor} is the function
$W(s) := R(s)\sqrt{R'(s)^2+(\eta(s) - 1)^2}$ where the \emph{curvature
  ratio} is given by $\eta(s) := R(s) \kappa(s)$ and $\kappa$ denotes
the curvature of the centreline $\bg$.  Because of the curvature of
$\Omega$, we adjust the sound speed for \eqref{IntroWebstersEq} by
defining $c(s) := c \Sigma(s)$ where $\Sigma(s) := \left ( 1 +
\tfrac{1}{4} \eta(s)^2 \right )^{-1/2}$ is the \emph{sound speed
  correction factor} as introduced\footnote{For generalised Webster's
  equation, we use the functions $A$, $\Sigma$, $\Xi$, $E$, and $W$
  that are introduced in terms of the tubular domain $\Omega$ in
  \cite{L-M:WECAD}. } in \cite[Section~3]{L-M:WECAD}.

\begin{assumptions} \label{StandingAssumption-1}
 We require that 
\begin{enumerate}
\item the tubular domain $\Omega$ does not fold into itself; i.e.,
  $\eta(s) < 1$ for all $s \in [0,1]$; and
\item the centreline $\bg(\cdot)$ and the radius function $R(\cdot)$
  are infinitely differentiable on $[0,1]$.
\end{enumerate}
\end{assumptions}
\noindent It follows from the smoothness that the rest of the data
satisfies
\begin{equation} \label{StandingSmoothness}
A(\cdot), \eta(\cdot), W(\cdot), c(\cdot), \Sigma(\cdot) \in
C^\infty([0,1]),
\end{equation}
and such a domain $\Omega$ satisfies all the assumptions listed in
\cite[Appendix~A]{A-L-M:AWGIDDS}.

The solution $\psi:[0,1]\times \rpluscl \to \R$ is \emph{Webster's
  velocity potential}.
It is expected to approximate the averages
\begin{equation} \label{AveragedWaveEqSol}
 \bar \phi(t,s) : = \frac{1}{A(s)} {\int_{ \Gamma(s) }{\phi d A}} \quad \text{ for } \quad 
s \in (0,1) \quad \text{ and }  \quad  t \in \rpluscl 
\end{equation}
of the velocity potential $\phi$ given by \eqref{IntroWaveEq} if the
inputs and initial states for both models are matched as shown in
Fig.~\ref{FigBoth}.  We call the difference $e := \psi - \bar \phi$
\emph{tracking error}, see the left panel of Fig.~\ref{FigBoth}. A
fundamental result on the tracking error is given in
\cite[Theorem~3.1]{L-M:WECAD}, and it is presented in right panel of
Fig.~\ref{FigBoth}: if the generalised Webster's equation is augmented
by an additional load function $f = F + G + H$, (depending on $\phi$
through \eqref{ControlTermsF}---\eqref{ControlTermsH} below), the
tracking error will vanish. We estimate the tracking error $e$ by a
method where the exact solution $\phi$ of the wave equation
\eqref{IntroWaveEq} is assumed to be known.  Hence, we call these
results \emph{a posteriori} estimates for Webster's equation even
though it is a solution of another equation that needs to be known.

\begin{figure}
  \includegraphics[height=5cm]{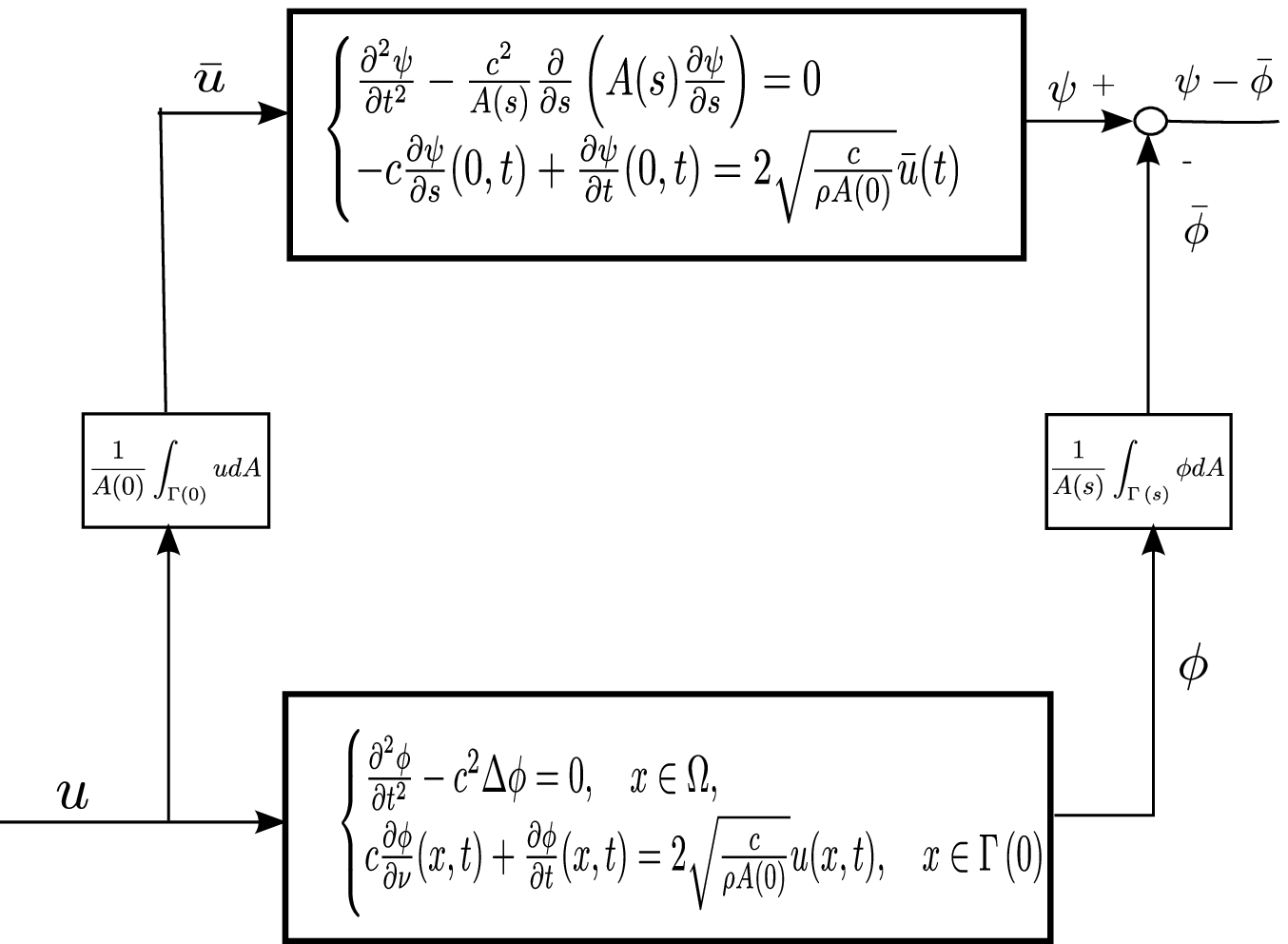}
  \includegraphics[height=5cm]{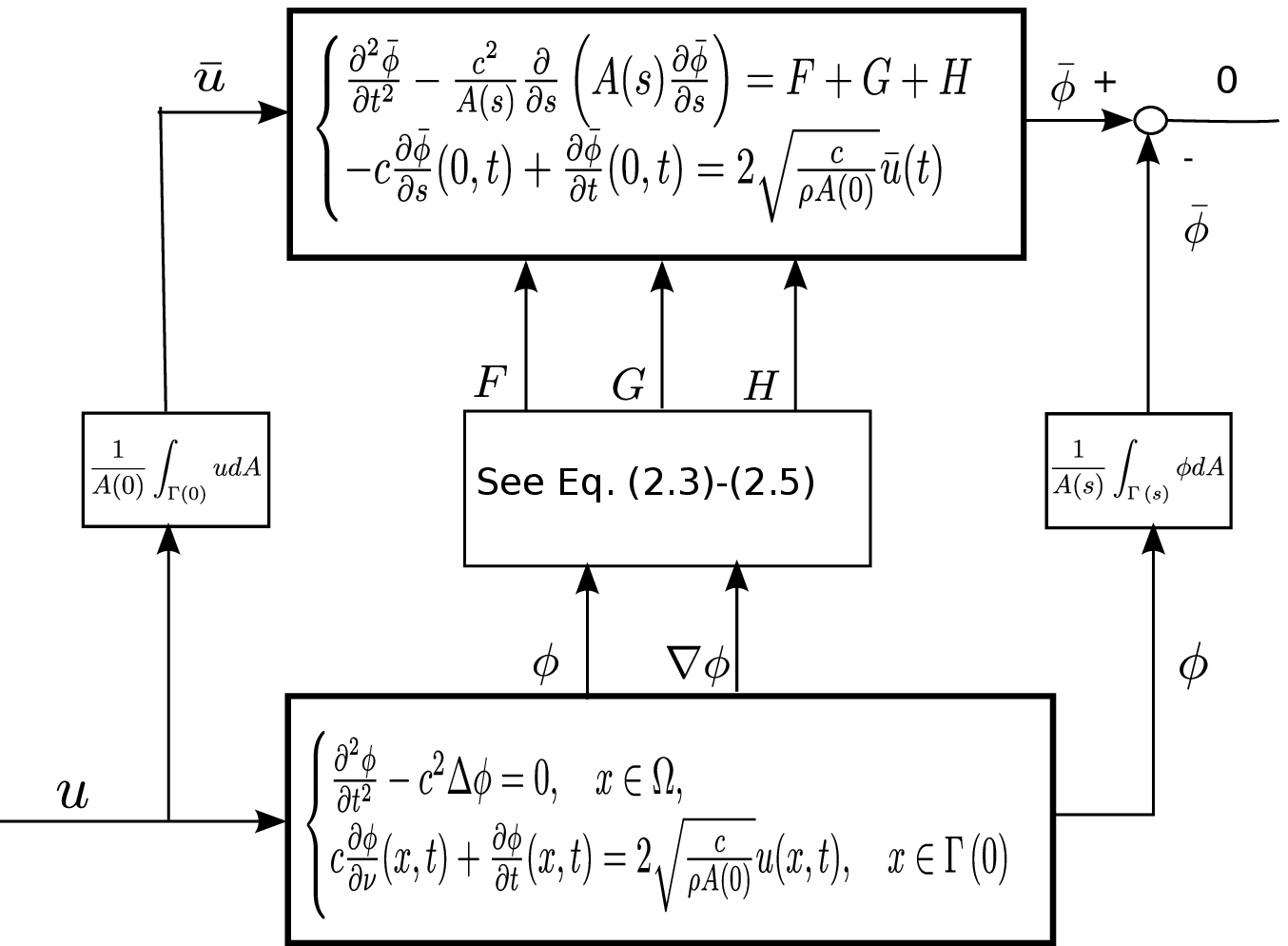} {Left panel: Feedforward coupling describing the
    tracking error $e = \psi - \bar \phi$. Right panel: The tracking
    error vanishes when the additional forcing functions $F, G$, and
    $H$ are applied. The equations in the blocks are as they
    appear in the lossless case $\alpha = 0$ and without curvature,
    i.e., $c(s) = c$. }
  \label{FigBoth}
\end{figure}


The article is organised as follows: we discuss the generalised
Webster's equation and its weak solution in the context of
\cite{L-M:WECAD} in Section~\ref{BackgroundSection} and also recall
the system node formulation from \cite{M-S-W:HTCCS}. We write the
inhomogeneous Webster's equation in terms of a scattering passive
system node and give the well-posedness estimate for the unique strong
solution in Section~\ref{WebsterDirectSec}.  This is used in the next
section where we show that the tracking error $e$ satisfies the first
\emph{a~posteriori} estimate, Theorem~\ref{MainTheorem2}. Then, we
estimate its right hand side by measuring how much $\phi$ differs from
its planar averages, leading to the second \emph{a~posteriori}
estimate, Theorem~\ref{MainTheorem3}.


\section{\label{BackgroundSection} Background}

\subsection{\label{InHomoWeb} Inhomogenous Webster's  equation}

Let us consider the interior/boundary point control problem
\begin{equation} \label{WebstersEqBnrCtrl}
  \begin{cases}
    & \psi_{tt} - \frac{c(s)^{2}}{A(s)} \frac{\partial}{\partial s}
    \left ( A(s) \frac{\partial \psi }{\partial s} \right ) 
    + \frac{2 \pi \alpha W(s) c(s)^{2} }{A(s)} \psi_t
= f(t,s)
\\   & \hfill \text{ for }   s \in (0,1) \text{ and } t \in \rplus, \\
    & - c \psi_s(t,0) + \psi_t(t,0) = 2 \sqrt{\frac{c}{\rho
        A(0)}} \, \tilde u(t) \quad
    \text{ for } t \in \rplus,  \\
    & \psi(t,1) = 0 \quad  \text{ for } t \in \rplus, \quad \text{ and } \\
    & \psi(0,s) = \psi_0(s), \quad \rho \psi_t(0,s) = \pi_0(s) \quad
    \text{ for } s \in (0,1),
\end{cases}
\end{equation}
with the observation $\tilde y$ is defined by
\begin{equation}\label{WebstersEqBnrCtrlObs}
   - c \psi_s(t,0) - \psi_t(t,0) = 2 \sqrt{\frac{c}{\rho
        A(0)}} \, \tilde y(t) \quad
    \text{ for } t \in \rplus.
\end{equation}
We allow for a nonvanishing load function $f$ in
\eqref{WebstersEqBnrCtrl}.  The reason for this is the fact that the
spatial averages $\bar \phi$ of $\phi$, given by
\eqref{AveragedWaveEqSol}, satisfy \eqref{WebstersEqBnrCtrl} (with
properly matched initial states and boundary control) as shown in
\cite[Theorem~3.1]{L-M:WECAD} if $f = F + G + H \in
C(\rpluscl;L^2(0,1))$ where
\begin{align} \label{ControlTermsF}
  F(t,s) & := - \frac{1}{A(s)} \frac{\partial}{\partial s} \left (
  A'(s) \left ( \bar \phi(s) - \frac{1}{2 \pi}\int_{0}^{2 \pi}
  \phi(s,R(s),\theta) \, d \theta \right ) \right ); \\
\label{ControlTermsG}
  G(t,s) & :=  - \frac{2 \pi  \alpha W(s)}{A(s)} \frac{\partial }{\partial t} \left ( \bar \phi(s) - \frac{1}{2
      \pi}\int_{0}^{2\pi}{ \phi(s,R(s),\theta)
      d\theta } \right ); \text{ and } \\
 \label{ControlTermsH}
  H(t,s) & := \int_{\Gamma(s)}{\frac{1}{\Xi} \nabla \left (
    \frac{1}{\Xi} \right )\cdot \nabla \phi \, d A } - \frac{1}{ A(s)}
  \int_{ \Gamma(s) } {E \Delta \phi d A } \\ & - \frac{\alpha W(s)
    \eta(s)}{A(s)} \left ( \int_{0}^{2\pi}{ \frac{\partial
      \phi}{\partial t} (s,R(s),\theta) \cos{\theta} d\theta } \right
  ) \nonumber
\end{align}
where the \emph{curvature factor} is given by $\Xi^{-1} :=  1 - r
  \kappa(s) \cos{\theta}$, and the \emph{error function} by
\begin{equation} \label{SSCFError}
   E(s,r,\theta) := \Xi^{-2} - \Sigma(s)^{-2} = - 2 r \kappa(s)
   \cos{\theta} + \kappa(s)^2 ( r^2 \cos^2{\theta} - R(s)^2/4);
\end{equation}
see \cite{L-M:WECAD} for details. It follows from the assumed
smoothness of $\gamma$ and $R(\cdot)$ and from
$\norm{\eta}_{L^\infty([0,1])} < 1$ that $E(\cdot), \Xi(\cdot) \in
C^\infty(\Omega)$.

In addition to the regularity \eqref{StandingSmoothness} of the
coefficient data for the Webster's model \eqref{WebstersEqBnrCtrl}, we
make additional requirements on the geometry of $\Omega$:
\begin{assumptions} \label{StandingAssumption}
We require that 
\begin{equation} \label{FurtherStandingAss}
\begin{aligned}
  & 0 < \min_{s \in [0,1]}{A(s)} \leq \max_{s \in [0,1]}{A(s)} < \infty \quad \text{ and } \\
  & 0 < \min_{s \in [0,1]}{c(s)} \leq \max_{s \in [0,1]}{c(s)} < \infty
\end{aligned}
\end{equation}
as well as $ A'(0)= \kappa(0) = 0$ at the control end $\Gamma(0)$ of
$\Omega$.
\end{assumptions}

We proceed to write \eqref{WebstersEqBnrCtrl} in operator form.
Define $W := \frac{1}{A(s)} \frac{\partial}{\partial s} \left ( A(s)
\frac{\partial}{\partial s} \right )$ and $D := - \frac{2 \pi W(s)
}{A(s)}$. Then the first of equations in \eqref{WebstersEqBnrCtrl} can
be cast into first order form by using the rule
\begin{equation*}
   \psi_{tt} = c(s)^{2} \left ( W \psi + \alpha D \psi_t \right )  + f \quad \hat{=} \quad
  \frac{d}{dt} \bbm{\psi \\ \pi} 
  = \bbm{0 & \rho^{-1} \\  \rho c(s)^2 W & \alpha c(s)^2 D} \bbm{\psi \\ \pi} + \bbm{0 \\ \rho f}.
\end{equation*}
Henceforth let $L_w := \sbm{0 & \rho^{-1} \\ \rho c(s)^2 W & \alpha c(s)^2 D}:\Zscr_w \to \Xscr_w$, and
\begin{align*}
  & \Zscr_w := \left (H_{\{1\}}\sp{1}(0,1) \cap H\sp{2}(0,1)\right ) \times
  H_{\{1\}}\sp{1}(0,1), \quad
  \Xscr_w := H_{\{1\}}\sp{1}(0,1) \times L\sp{2}(0,1) \\
  & \text{ where } \quad 
  H_{\{1\}}\sp{1}(0,1) := \left \{ f \in H\sp{1}(0,1): f(1) = 0 \right \}.
\end{align*}
The Hilbert spaces $\Zscr_w$ and $\Xscr_w$ are equipped with the norms
\begin{align*}
  & \norm{\sbm{z_1 \\ z_2}}_{\Zscr_w}^2 := \norm{z_1}_{H^2(0,1)}^2 + \norm{z_2}_{H^1(0,1)}^2 
  \quad \text{ and } \\
  & \norm{\sbm{z_1 \\ z_2}}_{H\sp{1}(0,1) \times L\sp{2}(0,1)}^2 :=
  \norm{z_1}_{H\sp{1}(0,1)}^2 + \norm{z_2}_{L\sp{2}(0,1)}^2,
\end{align*}
respectively. 
For any $\rho > 0$, the \emph{energy norm}
  \begin{equation} \label{WebstersEnergyNorm}
    \| \sbm{z_1 \\ z_2} \|_{\Xscr_w}^2 :=
    \frac{1}{2} \left (  \rho  \int_{0}^1{ \abs{z_1'(s)}^2A(s) \, ds} + 
     \frac{1}{\rho c^2} \int_{0}^1{ \abs{z_2(s)}^2 A(s) \Sigma(s)^{-2} \, ds} \right )
  \end{equation} 
  is an equivalent norm for $\Xscr_w$ because $\sqrt{2}
  \norm{z_1}_{L^2(0,1)} \leq \norm{z_1'}_{L^2(0,1)}$ for all $z_1 \in
  H_{\{1\}}\sp{1}(0,1)$.
\footnote{We denote the (strong) derivative of a (possibly
  vector-valued) function of one variable by prime. In particular,
  $f'$ denotes the $t$-derivative of load function $f = f(t,s)$ since
  it is regarded as the $L^2(0,1)$-valued function $t \mapsto
  f(t,\cdot)$. In PDE's, we denote the partial (distribution)
  derivatives by subindeces such as $\phi_{tt}$, $\phi_{ss}$, and so
  on.}  We define $\Yscr_w := \C$ with the absolute value norm $
\norm{u_0}_{\Yscr_w} := \abs{u_0}$, and the endpoint control and
observation functionals $G_w:\Zscr_w \to \Yscr_w$ and $K_w:\Zscr_w \to
\Yscr_w$ are defined by
\begin{align*}
  & G_w \sbm{z_1 \\ z_2} 
  := \frac{1}{2}\sqrt{\frac{A(0)}{ \rho  c(0)}} \left (- \rho c(0)z_1'(0) + z_2(0) \right ) \quad \text{ and } \\
  & K_w \sbm{z_1 \\ z_2} 
  := \frac{1}{2} \sqrt{\frac{A(0)}{\rho  c(0)}} \left (- \rho c(0)z_1'(0) - z_2(0) \right ).
\end{align*}
Now, the generalised Webster's equation \eqref{WebstersEqBnrCtrl}
for the state variable $x(t) = \sbm{\psi(t) \\ \pi(t)}$ can be cast in
the form
\begin{align} \label{WebstersDiffEq}
\begin{cases}
    & x'(t)  = L_w x(t) + \sbm{0 \\ \rho f(t,\cdot)}, \\
    & \tilde u(t)  = G_w x(t) ,  \quad  \tilde y(t) = K_w x(t)
   \quad \text{ for } t \in \rplus, \quad \text{ and } \\
   & x(0)  = \sbm{\psi_0 \\ \pi_0}.
\end{cases}
 \end{align}
As shown in \cite[Theorem~4.1]{A-L-M:AWGIDDS}, the triple 
\begin{equation} \label{BoundaryNode}
  \Xi^{(W)} := (G_w, L_w, K_w)
\end{equation}
 is a scattering passive, strong boundary node\footnote{It is shown in
   \cite[Theorems~4.1 and 5.1]{A-L-M:AWGIDDS} that the wave equation
   model in \eqref{IntroWaveEq} as well as the corresponding Webster's
   model in \eqref{IntroWebstersEq} are dynamical systems that can be
   represented as internally well-posed, passive boundary nodes.} on
 Hilbert spaces $(\Yscr_w,\Xscr_w,\Yscr_w)$ which is conservative if
 and only if $\alpha = 0$. For $\tilde u \in C^2(\rpluscl;\Yscr_w)$ and
 $\sbm{\psi_0 \\ \pi_0} \in \Zscr_w$, the unique classical solution of
 \eqref{WebstersDiffEq} follows in the special case that the load
 function $f$ identically vanishes (referring to the left panel in
 Fig.~\ref{FigBoth}.).

\subsection{On the weak solution of Webster's equation}

Assume that $\phi$ is a solution of the wave equation system
\eqref{IntroWaveEq} satisfying the regularity properties listed in
\eqref{WaveEqSolutionRegular} as discussed in Section~\ref{IntroSec}.
It has been shown in \cite[Theorem~3.1]{L-M:WECAD} that the averaged
solution $\bar \phi = \bar \phi(t,s)$ in \eqref{AveragedWaveEqSol}
satisfies
\begin{equation}     \label{AveragedSolutionRegular}
  \begin{aligned}
        & \bar \phi \in C^2(\rpluscl; L^2(0,1)) \quad \text{ and }
    \quad \bar \phi_s \in C^1(\rpluscl;L^2(0,1)),
  \end{aligned}
\end{equation}
and it is a \emph{weak} solution of the inhomogenous Webster's
equation
\begin{equation}  \label{WebsterInHomogenous}
     \bar \phi_{tt} - \frac{c(s)^{2}}{A(s)} \frac{\partial}{\partial s}
    \left ( A(s) \frac{\partial \bar \phi }{\partial s} \right ) 
    + \frac{2 \pi \alpha W(s) c(s)^{2} }{A(s)} \bar \phi_t
= F + G + H 
\end{equation}
where the additional load term $F + G + H \in C(\rpluscl;L^2(0,1))$ is
given by \eqref{ControlTermsF}---\eqref{ControlTermsH} above.  
This means plainly that
\begin{equation}
  \label{WebsterWeakForm}
  \begin{aligned}
    &\int_0^T\int_0^1\left(-\bar\phi_s \zeta_s-\frac{1}{c^2\Sigma(s)} \bar\phi_{tt}\zeta\right)A(s) \, dsdt 
    - 2\pi\alpha \int_0^T\int_0^1 W(s) \bar\phi_t \zeta  dsdt\\ 
    &=\int_0^T\int_0^1(F(s,t)+G(s,t)+H(s,t))\zeta(s,t)  A(s) \, dsdt 
  \end{aligned}
\end{equation}
for all test functions $\zeta\in C_0^\infty((0,1)\times (0,T))$ and
all $T>0$.  

Now, fix $t_0 \in (0,T)$ and let $\{ v_\epsilon \} \subset
C_0^\infty(0,T)$ for $\epsilon > 0$ be a family of non-negative
functions such that $\int_0^T{v_\epsilon \, dt} = 1$ and $\lim_{\epsilon
  \to 0}{v_\epsilon(t)} = 0$ for all $t \in (0,T)\setminus\{t_0 \}$.
Let $\xi \in C_0^\infty(0,1)$ and define $\zeta(s,t) := \xi(s)
v_{\epsilon}(t)$. By Fubini's Theorem, we get from \eqref{WebsterWeakForm}
\begin{equation} \label{WebsterWeakForm2}
  \begin{aligned}
    &\int_0^T \left ( \int_0^1\left(-\bar\phi_s(t,s)
    \xi_s(s)-\frac{1}{c^2\Sigma(s)} \bar\phi_{tt}(t,s)\xi(s)  \right)A(s) \, ds
    \right )v_{\epsilon}(t)\, dt \\
    & - 2\pi\alpha \int_0^T \left ( \int_0^1 W(s) \bar\phi_t(t,s) \xi(s)  ds \right ) v_{\epsilon}(t) \, dt
    \\ &=\int_0^T \left ( \int_0^1(F(s,t)+G(s,t)+H(s,t))\xi(s) A(s) \,  ds \right ) v_{\epsilon}(t) \,dt
  \end{aligned}
\end{equation}
By \eqref{AveragedSolutionRegular} and the fact that $F + G + H \in
C(\rpluscl;L^2(0,1))$, the three inner integrals in
\eqref{WebsterWeakForm2} represent continous functions in variable
$t$.  By letting $\epsilon \to 0$, we get the identity
\begin{equation*} 
  \begin{aligned}
    & \int_0^1\left(-\bar\phi_s(t_0,s)
    \xi_s(s)-\frac{1}{c^2\Sigma(s)} \bar\phi_{tt}(t_0,s)\xi(s)  \right)A(s) \, ds \\
    & - 2\pi\alpha  \int_0^1 W(s) \bar\phi_t(t_0,s) \xi(s)  ds
    \\ &= \int_0^1(F(s,t_0)+G(s,t_0)+H(s,t_0))\xi(s) A(s) \,  ds.
  \end{aligned}
\end{equation*}
This means that \eqref{WebsterInHomogenous} holds pointwise for all $t
= t_0 > 0$ if the four terms in \eqref{WebsterInHomogenous} are
regarded as \emph{distributions} for each fixed $t \in (0,1)$.  By
\eqref{AveragedSolutionRegular} and $F + G + H \in
C(\rpluscl;L^2(0,1))$, all other terms except the second in
\eqref{WebsterInHomogenous} are functions in $L^2(0,1)$ for any fixed
$t \in (0,1)$. We conclude that the equality in
\eqref{WebsterInHomogenous} holds in $L^2(0,1)$ (understood as a
subspace of distributions) for each fixed $t > 0$. Even the second
term in \eqref{WebsterInHomogenous} satisfies
\begin{equation} \label{SecondOrderTermInL2}
\frac{c(s)^{2}}{A(s)} \frac{\partial}{\partial s} \left ( A(s)
\frac{\partial \bar \phi }{\partial s} \right ) \in
C(\rpluscl;L^2(0,1)).
\end{equation}
By continuity, Webster's equation \eqref{WebsterInHomogenous} holds
with equality in $C(\rpluscl;L^2(0,1))$. This is the reformulation of
\cite[Theorem~3.1]{L-M:WECAD} that we use in this article.

\begin{lemma} \label{TheAverageIsSolutionInSolutionSpace}
  Let the functions $\phi$, $\bar \phi$, $F$, $H$, and $H$ be defined
  as above. Then $x(t) = \sbm{\bar \phi(t, \cdot) \\ \rho \bar
    \phi_t(t, \cdot)}$ is a solution of the first equation in
  \eqref{WebstersDiffEq} where $f = F + G + H$ and $L_w$ is given in
  Section~\ref{InHomoWeb}.
\end{lemma}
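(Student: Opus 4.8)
The plan is to verify the three defining properties of a (strong) solution of the first line of \eqref{WebstersDiffEq}: namely that (i) $x(t)\in\Zscr_w$ for every $t\geq 0$; (ii) $x\in C^1(\rpluscl;\Xscr_w)$ (and, if the solution concept also demands it, $x\in C(\rpluscl;\Zscr_w)$); and (iii) $x'(t)=L_w x(t)+\sbm{0 \\ \rho f(t,\cdot)}$ in $\Xscr_w$ for all $t>0$. The analytic core, item (iii), has essentially been settled in the discussion preceding the lemma: after expanding $L_w$, the top row of the equation reads $\bar\phi_t(t,\cdot)=\bar\phi_t(t,\cdot)$, while the bottom row, divided by $\rho$, is exactly Webster's equation \eqref{WebsterInHomogenous} with $f=F+G+H$, which we have already shown holds with equality in $C(\rpluscl;L^2(0,1))$ as the reformulation of \cite[Theorem~3.1]{L-M:WECAD}. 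So the genuine work is the function-space bookkeeping in (i) and (ii), drawing only on \eqref{AveragedSolutionRegular}, \eqref{SecondOrderTermInL2}, Assumption~\ref{StandingAssumption}, and the boundary behaviour of $\phi$.

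For (i) the argument runs as follows. First, $\bar\phi(t,\cdot)$ and its distributional $s$-derivative $\bar\phi_s(t,\cdot)$ both lie in $L^2(0,1)$ by \eqref{AveragedSolutionRegular}, so $\bar\phi(t,\cdot)\in H^1(0,1)$; moreover $\bar\phi(t,1)=\tfrac{1}{A(1)}\int_{\Gamma(1)}{\phi\,dA}=0$ by the Dirichlet condition on $\Gamma(1)$ in \eqref{IntroWaveEq}, hence $\bar\phi(t,\cdot)\in H_{\{1\}}^1(0,1)$. Next, \eqref{SecondOrderTermInL2} together with the smoothness and the strictly positive lower bounds of $A$ and $c$ from Assumption~\ref{StandingAssumption} gives $\partial_s(A\bar\phi_s)\in C(\rpluscl;L^2(0,1))$; writing $\partial_s(A\bar\phi_s)=A'\bar\phi_s+A\bar\phi_{ss}$ and using $\bar\phi_s\in C(\rpluscl;L^2(0,1))$ yields $\bar\phi_{ss}\in C(\rpluscl;L^2(0,1))$, i.e. $\bar\phi(t,\cdot)\in H^2(0,1)$. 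Finally $\bar\phi_t(t,\cdot)$ and its distributional $s$-derivative $\bar\phi_{ts}(t,\cdot)=\partial_t\bar\phi_s(t,\cdot)$ are in $L^2(0,1)$ by \eqref{AveragedSolutionRegular}, and $\bar\phi_t(t,1)=\tfrac{1}{A(1)}\int_{\Gamma(1)}{\phi_t\,dA}=0$ since $\phi\in C^1(\rpluscl;H^1(\Omega))$ and the trace of $\phi$ on $\Gamma(1)$ vanishes identically; thus $\rho\bar\phi_t(t,\cdot)\in H_{\{1\}}^1(0,1)$. Together these give $x(t)\in\Zscr_w$.

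For (ii): the second component $t\mapsto\rho\bar\phi_t(t,\cdot)$ belongs to $C^1(\rpluscl;L^2(0,1))$ directly from $\bar\phi\in C^2(\rpluscl;L^2(0,1))$. The first component $t\mapsto\bar\phi(t,\cdot)$ is continuous into $H^1(0,1)$ (both $\bar\phi$ and $\bar\phi_s$ are continuous into $L^2(0,1)$), and by the previous paragraph its $L^2(0,1)$-derivative $\bar\phi_t$ is likewise continuous into $H^1(0,1)$. One then invokes the standard fact that a curve continuous in $H_{\{1\}}^1(0,1)$ that is differentiable in the weaker space $L^2(0,1)$ with derivative continuous into $H_{\{1\}}^1(0,1)$ is in fact of class $C^1$ into $H_{\{1\}}^1(0,1)$: its $H_{\{1\}}^1$-valued Bochner integral must, by continuity of the embedding $H_{\{1\}}^1(0,1)\hookrightarrow L^2(0,1)$, coincide with the increment of $\bar\phi$. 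Hence $\bar\phi\in C^1(\rpluscl;H_{\{1\}}^1(0,1))$, so $x\in C^1(\rpluscl;\Xscr_w)$ with $x'(t)=\sbm{\bar\phi_t(t,\cdot) \\ \rho\bar\phi_{tt}(t,\cdot)}$; continuity of $x$ into $\Zscr_w$, if needed, follows from the same estimates applied to $\bar\phi_{ss}$ and $\bar\phi_{ts}$.

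With (i) and (ii) in hand, (iii) is immediate: evaluating $L_w x(t)$ componentwise and using $x'(t)=\sbm{\bar\phi_t \\ \rho\bar\phi_{tt}}$, the first row is the identity $\bar\phi_t=\bar\phi_t$, and the second row is $\rho$ times \eqref{WebsterInHomogenous}, which holds in $C(\rpluscl;L^2(0,1))$ as recalled above. I expect the main obstacle to be not any single deep step, since all the hard analytic content (the regularity statements \eqref{AveragedSolutionRegular} and \eqref{SecondOrderTermInL2} and the weak-solution identity) is imported from \cite{L-M:WECAD}, but rather the careful matching of function-space regularities: extracting $H^2(0,1)$-regularity in $s$ from the divergence-form bound \eqref{SecondOrderTermInL2}, and upgrading the time-differentiability of $\bar\phi$ from the topology of $L^2(0,1)$ to that of $H^1(0,1)$, where one must keep track of which derivatives are classical and which are distributional.
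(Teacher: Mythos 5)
Your proposal is correct and follows essentially the same route as the paper's proof: establish $x(t)\in\Zscr_w$ via the boundary condition at $s=1$, the regularity \eqref{AveragedSolutionRegular}, and the extraction of $\bar\phi_{ss}\in L^2(0,1)$ from \eqref{SecondOrderTermInL2}, then verify the equation componentwise using the $C(\rpluscl;L^2(0,1))$ form of \eqref{WebsterInHomogenous}. Your additional paragraph upgrading the time-differentiability of $\bar\phi$ to the $H^1(0,1)$ topology is a harmless extra verification that the paper leaves implicit.
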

\begin{proof}
  We first show that $x(t) \in \Zscr_w = \Dom{L}$ for all $t \geq
  0$. By the latter inclusion in \eqref{AveragedSolutionRegular} and
  the fact that $\bar \phi(t,1) = 0$ for all $t \geq 0$, we get $\bar
  \phi(t,\cdot) \in H^1_{\{1\}}(0,1)$. Because $A(\cdot)$ is continuously
  differentiable, it follows from \eqref{SecondOrderTermInL2} that
  $\bar \phi_{ss}(t,\cdot) \in L^2(0,1)$; implying $\bar \phi(t,\cdot)
  \in H^2(0,1)$.

  By the latter inclusion in \eqref{AveragedSolutionRegular}, $\bar
  \phi \in C^1(\rpluscl;H^1(0,1))$. Hence, $\bar \phi_t(t, \cdot) \in
  H^1_{1}(0,1)$ since $\bar \phi_t(t,1) = 0$ as a consequence of $\bar
  \phi(t,1) = 0$. We conclude that $\bar \phi_t(t,\cdot) \in
  H^1_{\{1\}}(0,1)$. We have now shown that $x(t) \in \Zscr_w$ for all
  $t$.

  The claim follows from
\begin{equation*}
  L_w \bbm{\bar \phi(t, \cdot) \\ \rho \bar \phi_t(t, \cdot)} =
  \bbm{\bar \phi_t(t, \cdot) \\ \rho c(s)^2 \left ( W \bar \phi(t,
    \cdot) + \alpha D \bar \phi_t(t, \cdot) \right )} = \bbm{\bar
    \phi_t(t, \cdot) \\ \rho \left (\bar \phi_{tt}(t, \cdot) - f(t,
    \cdot) \right )} 
\end{equation*}
where the last equality is by \eqref{WebsterInHomogenous}.  In
particular, $L_w x \in \Xscr_w$.
\end{proof}

As a consequence of \eqref{SecondOrderTermInL2} and \eqref{StandingSmoothness},
the averaged solution $\bar \phi$ as a little more regularity:
\begin{lemma} \label{ALittleBitMoreRegularityLemma}
  The function $\bar \phi$ defined above satisfies $\bar \phi \in
  H^2(0,1)$.
\end{lemma}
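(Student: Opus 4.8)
The plan is to strengthen the pointwise-in-$t$ statement $\bar\phi(t,\cdot)\in H^2(0,1)$ --- already obtained inside the proof of Lemma~\ref{TheAverageIsSolutionInSolutionSpace} --- to the continuous-in-$t$ version $\bar\phi\in C(\rpluscl;H^2(0,1))$, so that in particular $\bar\phi(t,\cdot)\in H^2(0,1)$ for every $t\geq 0$. Since \eqref{AveragedSolutionRegular} already yields $\bar\phi\in C(\rpluscl;H^1(0,1))$ (this is checked in the last paragraph below), the only missing ingredient is that the second spatial derivative depends continuously on $t$ with values in $L^2(0,1)$, and this is extracted from \eqref{SecondOrderTermInL2}.

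First I would multiply the quantity appearing in \eqref{SecondOrderTermInL2} by $A(s)/c(s)^2$. By \eqref{StandingSmoothness} together with \eqref{FurtherStandingAss}, the function $A/c^2$ lies in $C^\infty([0,1])$ and is bounded above and bounded away from zero; hence pointwise multiplication by it, and likewise by $A$ alone, is a bounded and boundedly invertible operator on $L^2(0,1)$ which maps $C(\rpluscl;L^2(0,1))$ into itself. Applying it to \eqref{SecondOrderTermInL2} gives $\partial_s\bigl(A(s)\bar\phi_s\bigr)=A'(s)\bar\phi_s+A(s)\bar\phi_{ss}\in C(\rpluscl;L^2(0,1))$. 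Now $\bar\phi_s\in C^1(\rpluscl;L^2(0,1))\subset C(\rpluscl;L^2(0,1))$ by \eqref{AveragedSolutionRegular} and $A'\in C^\infty([0,1])$ is bounded, so $A'\bar\phi_s\in C(\rpluscl;L^2(0,1))$; subtracting this and then dividing by $A$ yields $\bar\phi_{ss}\in C(\rpluscl;L^2(0,1))$.

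It remains to assemble the pieces. From \eqref{AveragedSolutionRegular}, the map $t\mapsto\bigl(\bar\phi(t,\cdot),\bar\phi_s(t,\cdot)\bigr)$ is continuous into $L^2(0,1)\times L^2(0,1)$ --- the first slot because $\bar\phi\in C^2(\rpluscl;L^2(0,1))$, the second because $\bar\phi_s\in C^1(\rpluscl;L^2(0,1))$ --- which is exactly continuity of $t\mapsto\bar\phi(t,\cdot)$ into $H^1(0,1)$. Combining $\bar\phi\in C(\rpluscl;H^1(0,1))$ with $\bar\phi_{ss}\in C(\rpluscl;L^2(0,1))$ gives $\bar\phi\in C(\rpluscl;H^2(0,1))$, and since $\bar\phi(t,1)=0$ for all $t$ this places $\bar\phi$ in the solution space with the homogeneous condition at $s=1$, as anticipated.

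I do not expect a genuine obstacle: the argument is bookkeeping on top of the regularity already recorded in \eqref{AveragedSolutionRegular} and \eqref{SecondOrderTermInL2}. The one point that should be stated explicitly is that multiplication by the smooth coefficients $A$ and $A/c^2$, and division by them, preserves $C(\rpluscl;L^2(0,1))$; this is immediate because, by \eqref{StandingSmoothness} and \eqref{FurtherStandingAss}, these functions are continuous, bounded, and bounded away from zero on the compact interval $[0,1]$.
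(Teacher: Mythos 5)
Your argument is correct and is essentially the paper's own: the lemma is stated there as an immediate consequence of \eqref{SecondOrderTermInL2} and \eqref{StandingSmoothness}, with the key step (expanding $\partial_s(A\bar\phi_s)=A'\bar\phi_s+A\bar\phi_{ss}$ and using that $A$ is smooth and bounded away from zero to isolate $\bar\phi_{ss}\in L^2(0,1)$) already appearing verbatim in the proof of Lemma~\ref{TheAverageIsSolutionInSolutionSpace}. Your additional bookkeeping to upgrade the pointwise-in-$t$ statement to $\bar\phi\in C(\rpluscl;H^2(0,1))$ is a harmless and correct refinement of the same route.
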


\subsection{On system nodes}

To treat the case $f \neq 0$ in \eqref{WebstersDiffEq}, we rewrite
\eqref{WebstersDiffEq} in terms of \emph{system nodes} in
Section~\ref{WebsterDirectSec}. There exists a wide literature on
system nodes, and we give a short reminder on what we need based on
\cite{M-S-W:HTCCS,OS:WPLS}.

Following \cite[Definition 2.1]{M-S:CBCS} or \cite[Definition
  2.2]{M-S-W:HTCCS}, the system node is characterised as follows:
\begin{definition} \label{SysNodeDef}
An operator
$$S := \SysNode \colon \Xscr \times \Uscr \supset \Dom
S \to \Xscr \times \Yscr$$ is called an \emph{system node} on the
Hilbert spaces $(\Uscr,\Xscr,\Yscr)$ if the following holds:
\begin{enumerate}

\item 
$A$ is a generator of a strongly continuous semigroup on $\Xscr$.

\item $B \in \BLO(\Uscr;\Xscr_{-1})$ where $\Xscr_{-1} = \Dom{A^*}^d
  \supset \Xscr$ is the usual extrapolation space.

\item $\Dom S = \{ \sbm{x \\u }
\in \Xscr \times \Uscr :  A_{-1}x + Bu \in \Xscr \}$
where  $A_{-1} \in
\BLO(\Xscr;\Xscr_{-1})$ is the Yoshida extension of $A$.

\item $\AB=\bbm{A_{-1} & B}\rst{\Dom S}$. 

\item $\CD \in \BLO(\Dom S;\Yscr)$ where
we use on $\Dom S$ the graph norm of
$\AB$:
$$ \left \| \sbm{x \\ u} \right \|_{\Dom S}^2 := \norm x_{\Xscr}^2 +
   \norm u_{\Uscr}^2 + \norm{A_{-1}x + Bu}_{\Xscr}^2 .$$
\end{enumerate}
\end{definition}
\noindent Details of $A_{-1}$ and $\Xscr_{-1}$ can be found in, e.g.,
\cite[Proposition 2.1]{M-S-W:HTCCS}. We also use the Hilbert space
$\Xscr_{1} = \Dom{A}$ equipped with the graph norm of $A$. Whenever we
refer to these spaces for the \emph{dual node} $S^d$ (as characterised
in \cite[Proposition 2.4]{M-S-W:HTCCS}), we use the symbols
$\Xscr_{1}^d$ and $\Xscr_{-1}^d$.


The dynamical equations for systems nodes  take the
form that is reminiscent of the equations in finite-dimensional linear
system theory where $S = \sbm{A & B \\ C & D}$ :
\begin{equation}\label{SSSS}
 \bbm{x'(t) \\ \tilde y(t)} = S \bbm{x(t) \\ \tilde u(t)} 
\text{ for } t \in \rplus;  \quad 
 \quad x(0) = x_0. 
 \end{equation}
\begin{proposition} \label{SystemNodeSolvabilityProp}
Assume that $S = \SmallSysNode$ is a system node with domain
$\Dom{S}$.  For all $x_0 \in \Xscr$ and $\tilde u \in
C^2(\rpluscl;\Uscr)$ with $\sbm{x_0 \\ \tilde u(0)} \in \Dom{S}$ the
equations \eqref{SSSS} are uniquely solvable, and the solutions
satisfy $x \in C^1(\rpluscl;\Xscr)$, $\tilde y \in C(\rpluscl;\Yscr)$, and
$\sbm{x \\ \tilde u} \in C(\rpluscl; \Dom{S})$.
\end{proposition}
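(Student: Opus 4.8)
The plan is to peel off the unbounded control action $B\tilde u$ by a change of unknown, reducing \eqref{SSSS} to an inhomogeneous abstract Cauchy problem in $\Xscr$ governed by the $C_0$-semigroup generated by $A$ (Definition~\ref{SysNodeDef}(i)), for which classical semigroup theory gives existence, uniqueness and regularity; this regularity is then transported back. Fix a point $\beta$ in the resolvent set of $A$, which is nonempty because $A$ generates a $C_0$-semigroup. On the extrapolation scale, $\beta - A_{-1}$ is an isomorphism of $\Xscr$ onto $\Xscr_{-1}$ (and of $\Xscr_1$ onto $\Xscr$), so $(\beta - A_{-1})^{-1}$ maps $\Xscr_{-1}$ boundedly into $\Xscr$ and maps $\Xscr$ into $\Xscr_1 = \Dom A$ (see \cite[Proposition~2.1]{M-S-W:HTCCS}); in particular $(\beta - A_{-1})^{-1}B \in \BLO(\Uscr;\Xscr)$ by Definition~\ref{SysNodeDef}(ii). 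Introduce
\begin{equation*}
  \xi(t) := x(t) - (\beta - A_{-1})^{-1} B\,\tilde u(t) \in \Xscr .
\end{equation*}
Writing the first line of \eqref{SSSS} as $x'(t) = A_{-1}x(t) + B\tilde u(t)$ in $\Xscr_{-1}$ (Definition~\ref{SysNodeDef}(iii)--(iv)) and using the identity $A_{-1}(\beta - A_{-1})^{-1} = \beta(\beta - A_{-1})^{-1} - I$ on $\Xscr_{-1}$, a short computation shows that $\xi$ must solve
\begin{equation*}
  \xi'(t) = A\xi(t) + g(t), \qquad \xi(0) = x_0 - (\beta - A_{-1})^{-1}B\tilde u(0),
\end{equation*}
where $g(t) := \beta(\beta - A_{-1})^{-1}B\tilde u(t) - (\beta - A_{-1})^{-1}B\tilde u'(t)$, and conversely such a $\xi$ yields a solution $x$ of \eqref{SSSS}.

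I would carry out the ``conversely'' direction for existence, and both hypotheses enter here and only here. Since $\tilde u \in C^2(\rpluscl;\Uscr)$ and $(\beta - A_{-1})^{-1}B \in \BLO(\Uscr;\Xscr)$, the forcing satisfies $g \in C^1(\rpluscl;\Xscr)$; and the compatibility condition $\sbm{x_0 \\ \tilde u(0)} \in \Dom S$ is, by Definition~\ref{SysNodeDef}(iii), precisely $A_{-1}x_0 + B\tilde u(0) \in \Xscr$, whence $(\beta - A_{-1})\xi(0) = \beta x_0 - (A_{-1}x_0 + B\tilde u(0)) \in \Xscr$ and so $\xi(0) \in \Xscr_1 = \Dom A$. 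By the classical theory of inhomogeneous Cauchy problems with a $C^1$ forcing term and an initial value in $\Dom A$ (see, e.g., \cite{OS:WPLS}), there is a unique $\xi \in C^1(\rpluscl;\Xscr) \cap C(\rpluscl;\Xscr_1)$ solving this problem. Put $x(t) := \xi(t) + (\beta - A_{-1})^{-1}B\tilde u(t)$; then $x(0) = x_0$, and $x \in C^1(\rpluscl;\Xscr)$ since $\tilde u \in C^1(\rpluscl;\Uscr)$. Reversing the algebra above, together with $\xi'(t) = A\xi(t) + g(t)$ and $\xi(t) \in \Dom A$, gives $A_{-1}x(t) + B\tilde u(t) = x'(t) \in \Xscr$ for all $t$, so $\sbm{x(t) \\ \tilde u(t)} \in \Dom S$ (Definition~\ref{SysNodeDef}(iii)) and the first line of \eqref{SSSS} holds in $\Xscr$; setting $\tilde y(t) := \CD\sbm{x(t) \\ \tilde u(t)}$ completes a solution.

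The remaining assertions are then immediate. By the graph-norm identity of Definition~\ref{SysNodeDef}(v) and the equality $A_{-1}x(t) + B\tilde u(t) = x'(t)$ just obtained, the graph norm of $\sbm{x(t) \\ \tilde u(t)}$ in $\Dom S$ equals $\bigl(\|x(t)\|_{\Xscr}^2 + \|\tilde u(t)\|_{\Uscr}^2 + \|x'(t)\|_{\Xscr}^2\bigr)^{1/2}$, so continuity of $x$ and $x'$ in $\Xscr$ and of $\tilde u$ in $\Uscr$ yields $\sbm{x \\ \tilde u} \in C(\rpluscl;\Dom S)$, and then $\tilde y = \CD\sbm{x \\ \tilde u} \in C(\rpluscl;\Yscr)$ because $\CD \in \BLO(\Dom S;\Yscr)$. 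For uniqueness, the difference $w$ of two solutions satisfies $\sbm{w(t) \\ 0} \in \Dom S$, i.e. $A_{-1}w(t) \in \Xscr$; this forces $w(t) \in \Dom A$ with $A_{-1}w(t) = Aw(t)$, so $w' = Aw$ with $w(0) = 0$ and hence $w \equiv 0$ by uniqueness for the semigroup generated by $A$; consequently the two solutions, and with them the two outputs, coincide.

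The work here is not any single hard estimate but careful bookkeeping on the rigged scale $\Xscr_1 \subset \Xscr \subset \Xscr_{-1}$ — making sure the substitution and the identities for $A_{-1}$ and $(\beta - A_{-1})^{-1}$ are applied on the spaces where they hold — and, above all, invoking each hypothesis at exactly the right place: the compatibility condition to place $\xi(0)$ in $\Dom A$, and $\tilde u \in C^2$ rather than merely $C^1$ to make $g$ lie in $C^1(\rpluscl;\Xscr)$, which is what upgrades the mild solution of the reduced Cauchy problem to a classical one.
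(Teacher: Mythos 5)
Your proof is correct. There is, however, nothing in the paper to compare it against: the paper does not prove Proposition~\ref{SystemNodeSolvabilityProp} at all, but simply cites \cite[Proposition~2.6]{M-S-W:HTCCS}. What you have produced is a self-contained reconstruction of that cited result via the classical resolvent-shift reduction: subtracting $(\beta-A_{-1})^{-1}B\tilde u(t)$ turns the node equation into an inhomogeneous Cauchy problem for the generator $A$ with $C^1$ forcing and initial datum in $\Dom{A}$, and each hypothesis is spent exactly where it is needed ($\tilde u\in C^2(\rpluscl;\Uscr)$ to get $g\in C^1(\rpluscl;\Xscr)$, the compatibility condition $\sbm{x_0\\ \tilde u(0)}\in\Dom{S}$ to get $\xi(0)\in\Dom{A}$). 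The identity $A_{-1}(\beta-A_{-1})^{-1}=\beta(\beta-A_{-1})^{-1}-I$ is applied on the correct spaces, the recovery of $\sbm{x(t)\\ \tilde u(t)}\in\Dom{S}$ together with continuity in the graph norm is handled properly, and the uniqueness argument (the difference $w$ lies in $\Dom{A}$ and solves $w'=Aw$, $w(0)=0$) is sound --- with the one caveat that you should state explicitly the class in which uniqueness is asserted, namely classical solutions ($x$ continuously differentiable with values in $\Xscr$ and $\sbm{x(t)\\ \tilde u(t)}\in\Dom{S}$ pointwise), since otherwise ``the difference of two solutions'' has no a priori regularity to feed into the semigroup uniqueness argument. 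The proofs in the literature (e.g.\ in \cite{OS:WPLS}) typically run instead through the variation-of-constants formula in $\Xscr_{-1}$ and integration by parts using $\tilde u\in C^2$; your substitution argument is an equally standard alternative and is arguably more transparent about where the two hypotheses enter.
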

\noindent This is given in \cite[Proposition 2.6]{M-S-W:HTCCS}, and
these solutions are called \emph{classical} in the sense of
mathematical systems theory. For a more complete treatment of system
nodes, see \cite[Section 2]{M-S-W:HTCCS}.

\section{\label{WebsterDirectSec} Inhomogenous Webster's model}

The purpose of this section is to rewrite the inhomogenous Webster's
model \eqref{WebstersDiffEq} as a system node with an energy
inequality.

As argued in \cite[Section 2]{M-S:CBCS}, boundary node $\Xi^{(W)} =
(G_w, L_w, K_w)$ from \eqref{BoundaryNode} induces a unique system
node $S = \SmallSysNode$ on Hilbert spaces $(\Yscr_w, \Xscr_w,
\Yscr_w)$ with operators $A$, $A_{-1}$, $B$, and $\CD$ as in
Definition~\ref{SysNodeDef}.  Then, if $\tilde u \in
C^2(\rpluscl;\Yscr_w)$ and $x_0 = \sbm{\psi_0 \\ \pi_0} \in \Zscr_w$,
the functions $x$, $\tilde y$ in \eqref{WebstersDiffEq} and
\eqref{SSSS} are the same if $f \equiv 0$ in \eqref{WebstersDiffEq}.
The node $S$ is of boundary control form in the sense that $B \Yscr_w
\cap \Xscr_w = \{ 0 \}$ and $\Null{B} = \{ 0 \}$, and we make use of
the following relations\footnote{A shorter way of writing all this is
  $\sbm{L_w \\ K_w}= S \sbm{I \\ G_w}$.} connecting $S$ and
$\Xi^{(W)}$: $\Dom{S} = \sbm{I \\ G_w} \Zscr_w$, $A =
L_w\rst{\Null{G_w}}$ with $\Dom{A}= \Null{G_w}$, $L_w =
A_{-1}\rst{\Zscr_w} + B G_w$, and $\CD = \bbm{K_w & 0}\rst{\Dom{S}}$;
for details, see, e.g., \cite[Section~2.2]{M-S:CBCS}. The unbounded
adjoint of $A$ satisfies $A^* := -L_w \rst{\Null{K_w}}$ with
$\Dom{A^*} = \Null{K_w}$ by \cite[Theorem 1.7 and Proposition
  4.3]{M-S:CBCS}. To write \eqref{WebstersDiffEq} as a system node,
say $S^{(W)}$, amounts to augmenting $S$ with an additional input that
accommodates the load term $f$.

We define the Hilbert spaces $(\Xscr_w)_1 := \Dom{A}$ and
$(\Xscr_w)_1^* := \Dom{A^*}$ with the graph norms
$\norm{z}_{(\Xscr_w)_1}^2 = \norm{A z}_{\Xscr_w}^2 + \norm
z_{\Xscr_w}^2$ and $\norm{z}_{(\Xscr_w)_1^*}^2 = \norm{A^*
  z}_{\Xscr_w}^2 + \norm z_{\Xscr_w}^2$, respectively. Define
$(\Xscr_w)_{-1}$ to be the dual of $\Dom{A_w^*}$ when we identify the
dual of $\Xscr_w$ with itself.  Then $(\Xscr_w)_1 \subset \Xscr_w
\subset (\Xscr_w)_{-1}$ with continuous and dense
embeddings.\footnote{Recall that $(\Xscr_w)_1 \subset \Zscr_w \subset
  \Xscr_w$ but $(\Xscr_w)_1$ is not dense in $\Zscr_w$.}  With these
definitions, $B \in \BLO(\Yscr_w ;(\Xscr_w)_{-1})$.

Define the control operators $B^{(e)} := \sbm{0 \\ \rho}:L^2(0,1) \to
\Xscr_w$ and $B_w := \bbm{B & B^{(e)}} \in \BLO(\Uscr_w;
(\Xscr_w)_{-1})$ where $\Uscr_w := \Yscr_w \times L^2(0,1)$ with the
norm \linebreak $\norm{\sbm{\tilde u \\ f}}_{\Uscr_w}^2 = \norm{\tilde
  u}_{\Yscr_w}^2 + \norm{f}_{L^2(0,1)}^2$. Define $\Dom{S^{(W)}} :=
\Dom{S} \times L^2(0,1)$ (where $\Dom{S} = \sbm{I \\ G_w} \Zscr_w$)
with the norm
 \begin{equation*} \label{ABGraphNorm}
 \norm{\sbm{z \\ \tilde  u \\ f}}_{\Dom{S^{(W)}}}^2
 = \norm{ z }_{\Zscr_w}^2 + \norm{ G_w z }_{\Uscr_w}^2  + \norm f_{L^2(0,1)}^2
\end{equation*}
and the operators
\begin{equation*}
  [\AB]_w := \bbm{A_{-1} & B_w}\rst{\Dom{S^{(W)}}} \quad \text{ and } \quad  [\CD]_w :=
\bbm{\CD & 0}\rst{\Dom{S^{(W)}}}
\end{equation*}
yields now the system node 
\begin{equation} \label{WebstersSystemNode}
  S^{(W)} := \bbm{[\AB]_w \cr [\CD]_w} 
\end{equation}
on the Hilbert spaces $(\Uscr_w,\Xscr_w,\Yscr_w)$ with domain
$\Dom{S^{(W)}}$. It is clear from the construction that $S^{(W)}$ has been
obtained by adding a new input (using the operator $B^{(e)}$ above) to the
system node $S$ that is associated to boundary node $\Xi^{(W)}$ by
\cite[Theorem 2.3]{M-S:CBCS}.

The node $S^{(W)}$ is, in particular, internally well-posed since it has
the same semigroup as $S$.  Hence, for any $\sbm{\psi_0 \\ \pi_0} \in \Zscr_w$ and
$\sbm{\tilde u \\ f } \in C^2(\rpluscl;\Uscr_w)$ satisfying the
compatibility condition $G_w \sbm{\psi_0 \\ \pi_0} = \tilde u(0)$, the first and the
last of the equations in
\begin{equation} \label{WebsterSystemNodeCauchy}
  \begin{cases}
    x'(t) & = A_{-1} x(t) + B_w \sbm{\tilde u(t)\\ f(t,\cdot)}, \\ \tilde  y(t) & =
     [\CD]_w \, \sbm{x(t) \\ \tilde u(t)\\ f(t,\cdot)} \quad \text{ for } \quad t
     \in \rplus, \quad \text{ and }\\ x(0) &= \sbm{\psi_0 \\ \pi_0}
\end{cases}
\end{equation}
have a unique classical solution $x \in C^1(\rpluscl;\Xscr_w)$ with
$\sbm{x \\ \tilde u \\ f} \in C(\rpluscl;\Dom{S^{(W)}})$.  (These
equations are plainly \eqref{SSSS} written for $S^{(W)}$ instead of
$S$.)  Then the output signal can be defined through the second of the
equations in \eqref{WebsterSystemNodeCauchy} since $[\CD]_w \in
\BLO(\Dom{S^{(W)}}; \Uscr_w)$ as in
Proposition~\ref{SystemNodeSolvabilityProp}.  We conclude that
\eqref{WebstersDiffEq} and \eqref{WebsterSystemNodeCauchy} are
equivalent Cauchy problems under the assumptions on $\sbm{\psi_0
  \\ \pi_0}$ and $\sbm{\tilde u \\ f }$ stated above.

The state $x(\cdot)$ in equations \eqref{WebsterSystemNodeCauchy} is
controlled both from the boundary points $0,1$ (using the control
function $\tilde u$) and also from all of the interior points of the
interval $[0,1]$ (using the control function $f$).  We show next that
that if both $\tilde u$ and $f$ are twice continuously differentiable
in time, the boundary and the interior point parts of the control ``do
not mix''.
\begin{proposition} \label{FirstStateSplittinProp}
  Let $\sbm{\psi_0 \\ \pi_0} \in \Zscr_w$, $\sbm{ \tilde u \\ f } \in
  C^2(\rpluscl;\Uscr_w)$, and $G_w \sbm{\psi_0 \\ \pi_0} = \tilde
  u(0)$.  Then the classical solution $x$ of the first and the last of
  equations \eqref{WebsterSystemNodeCauchy} (associated with the
  system node in \eqref{WebstersSystemNode}) satisfies $x = z + w$
  where $z$ is the classical solution of \eqref{WebstersDiffEq} with
  $f \equiv 0$ (associated with the boundary node $\Xi^{(W)}$ in
  \eqref{BoundaryNode}), and $w(t) \in \Null{G_w}$ for all $t \geq 0$.
\end{proposition}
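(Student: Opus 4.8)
The plan is to construct $z$ first and then obtain $w$ as the difference $x-z$, checking directly that it takes values in $\Null{G_w}$. Since $\sbm{\psi_0 \\ \pi_0} \in \Zscr_w$, $G_w \sbm{\psi_0 \\ \pi_0} = \tilde u(0)$, and the first component of $\sbm{\tilde u \\ f} \in C^2(\rpluscl;\Uscr_w)$ lies in $C^2(\rpluscl;\Yscr_w)$, the boundary node $\Xi^{(W)}$ has a unique classical solution $z \in C^1(\rpluscl;\Xscr_w)$ of \eqref{WebstersDiffEq} with $f \equiv 0$; equivalently (as recalled in Section~\ref{WebsterDirectSec}) $z$ is the classical solution of \eqref{SSSS} for the induced system node $S$, so $\sbm{z \\ \tilde u} \in C(\rpluscl;\Dom S)$, which in particular means $z(t) \in \Zscr_w$ and $G_w z(t) = \tilde u(t)$ for every $t \geq 0$. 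The function $x \in C^1(\rpluscl;\Xscr_w)$ is the classical solution of \eqref{WebsterSystemNodeCauchy} provided by Proposition~\ref{SystemNodeSolvabilityProp}; note $x(0) = \sbm{\psi_0 \\ \pi_0} = z(0)$.

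Next I would set $w := x - z \in C^1(\rpluscl;\Xscr_w)$ and subtract the two evolution equations. Using $B_w \sbm{\tilde u(t) \\ f(t,\cdot)} = B \tilde u(t) + B^{(e)} f(t,\cdot)$ together with the relation $L_w = A_{-1}\rst{\Zscr_w} + B G_w$ from Section~\ref{WebsterDirectSec} and $G_w z(t) = \tilde u(t)$, the Cauchy problems become
\begin{equation*}
 x'(t) = A_{-1} x(t) + B\tilde u(t) + B^{(e)} f(t,\cdot), \qquad z'(t) = A_{-1} z(t) + B\tilde u(t),
\end{equation*}
as identities in $(\Xscr_w)_{-1}$, where $A_{-1} \in \BLO(\Xscr_w;(\Xscr_w)_{-1})$ and $B \in \BLO(\Yscr_w;(\Xscr_w)_{-1})$. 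Subtracting, the boundary contributions $B\tilde u(t)$ cancel and $w$ solves $w'(t) = A_{-1} w(t) + B^{(e)} f(t,\cdot)$ with $w(0) = 0$.

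The final step is to observe that $B^{(e)} f(t,\cdot) = \sbm{0 \\ \rho f(t,\cdot)} \in \Xscr_w$ for all $t$, so that the rearranged identity $A_{-1} w(t) = w'(t) - B^{(e)} f(t,\cdot)$ has its right-hand side in $\Xscr_w$ for every $t \geq 0$ (here $w' \in C(\rpluscl;\Xscr_w)$ since $w \in C^1$). By the standard description of a generator through its Yosida extension, $\Dom A = \{\, v \in \Xscr_w : A_{-1} v \in \Xscr_w \,\}$ (see, e.g., \cite[Proposition~2.1]{M-S-W:HTCCS}), hence $w(t) \in \Dom A$ for all $t$; and $\Dom A = \Null{G_w}$ by the identification $A = L_w\rst{\Null{G_w}}$ recorded in Section~\ref{WebsterDirectSec}. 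Therefore $w(t) \in \Null{G_w}$ for every $t \geq 0$ and $x = z + w$ is the asserted decomposition.

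I expect the only real obstacle to be the bookkeeping in the extrapolation space: the evolution equations for $x$ and for $z$ are genuine identities only in $(\Xscr_w)_{-1}$, so one must make sure $A_{-1} x(t)$ and $A_{-1} z(t)$ are separately meaningful there and that the unbounded boundary terms $B\tilde u(t)$ really cancel — which is exactly where $z(t) \in \Zscr_w$ and $G_w z(t) = \tilde u(t)$ along the classical solution enter. Everything else is soft; in fact the full hypothesis $f \in C^2(\rpluscl;L^2(0,1))$ is consumed only in invoking Proposition~\ref{SystemNodeSolvabilityProp} to produce $x$, while for the splitting itself the continuity of $t \mapsto B^{(e)} f(t,\cdot)$ into $\Xscr_w$ plus $w \in C^1$ already forces $w(t) \in \Dom A$.
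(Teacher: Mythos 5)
Your proof is correct, but it reaches the key conclusion $w(t)\in\Null{G_w}$ by a genuinely different route from the paper. The paper also splits $x=z+w$ by linearity, but it then \emph{constructs} $w$ explicitly: it differentiates the perturbation equation in time, solves the resulting problem \eqref{PerturbationEquation2} strongly by the variation-of-constants formula to get $\tilde w\in C(\rpluscl;\Xscr_w)$, integrates to recover $w$, and only then reads off $A_{-1}w(t)=\tilde w(t)-B^{(e)}f(t,\cdot)\in\Xscr_w$, giving $w\in C(\rpluscl;(\Xscr_w)_1)$. You instead take $w:=x-z$ with $x$ and $z$ the two already-available classical solutions, cancel the $B\tilde u(t)$ terms in $(\Xscr_w)_{-1}$, and conclude $A_{-1}w(t)=w'(t)-B^{(e)}f(t,\cdot)\in\Xscr_w$ directly from $w\in C^1(\rpluscl;\Xscr_w)$; combined with $\Dom{A}=\{v\in\Xscr_w: A_{-1}v\in\Xscr_w\}=\Null{G_w}$ this is a complete argument, and as you note it consumes the $C^2$ hypothesis on $f$ only through the existence of $x$. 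What the paper's longer construction buys is (i) the explicit formula $w(t)=\int_0^t T(t-\tau)B^{(e)}f(\tau,\cdot)\,d\tau$, which is reused verbatim in the proof of Theorem~\ref{WebsterNodeIsWellPosedThm} to obtain the bound \eqref{WebsterNodeIsWellPosedPropEq3}, and (ii) the slightly stronger continuity statement $w\in C(\rpluscl;(\Xscr_w)_1)$ rather than pointwise membership in $\Dom{A}$. A minor further difference: you obtain $G_wz(t)=\tilde u(t)$ from $\sbm{z\\ \tilde u}\in C(\rpluscl;\Dom{S})$, while the paper derives it from the non-degeneracy of $B$ (namely $B\Yscr_w\cap\Xscr_w=\{0\}$ and $\Null{B}=\{0\}$); both are valid.
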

\begin{proof}  
  By linearity, the classical solution $x$ of
  \eqref{WebsterSystemNodeCauchy} can be decomposed as the sum $x = z
  + w$ of two classical solutions $z$ and $w$ for $t \in \rplus$ of
  the equations
  \begin{equation} \label{BoundaryControlEquation} 
    z'(t) = A_{-1} z(t) + B_w \sbm{\tilde u(t) \\ 0 } = A_{-1}
    z(t) + B \tilde u(t) \text{ with }  z(0) = \sbm{\psi_0 \\ \pi_0};
  \end{equation} 
  and
  \begin{equation} \label{PerturbationEquation} 
    w'(t) = A_{-1} w(t) + B_w \sbm{0 \\ f(t,\cdot)}
    =  A_{-1} w(t) + B^{(e)} f(t,\cdot)  
     \text{ with } w(0) = 0.
  \end{equation} 
  Because the operators $A_{-1}$ and $B$ relate to $S$ (as introduced
  in the beginning of this section) and, hence, to the boundary node
  $\Xi^{(W)}$ in \eqref{BoundaryNode}, equations
  \eqref{BoundaryControlEquation} give $z'(t) = L z(t) + B(\tilde u(t) -
  Gz(t)) = Lz(t)$, $B(\tilde u(t) - Gz(t)) = 0$, and hence $ \tilde u(t)= G_w z(t)$
  because $\Null{B} = \{ 0 \}$.
  
  Consider next the initial value problem
  \begin{equation} \label{PerturbationEquation2} 
      \tilde w'(t)  = A_{-1} \tilde w(t) + B^{(e)} f'(t,\cdot)  
      \quad \text{ for } \quad   t \in \rplus, \quad \tilde w(0) = B^{(e)} f(0),
  \end{equation} 
  where now $f' \in C^1(\rpluscl;L^2(0,1))$ and $\tilde w(0) \in
  \Xscr_w$.  Denote by $T(\cdot)$ the strongly continuous contraction
  semigroup on $\Xscr_w$ generated by $A$.  Because $B^{(e)} \in
  \BLO(L^2(0,1);\Xscr_w)$, the variation of constants formula $\tilde
  w(t) = T(t)B^{(e)} f'(0,\cdot) + \int_{0}^t{T(t - \tau)B^{(e)}
    f'(\tau,\cdot)\, d\tau}$ gives a unique strong solution of
  \eqref{PerturbationEquation2} satisfying $\tilde w \in
  C(\rpluscl;\Xscr_w)$; see \cite[Theorem 3.8.2(iv)]{OS:WPLS}. Then $w$
  defined by $w(t) := \int_{0}^{t}{\tilde w(\tau)\, d\tau}$ satisfies
  $\tilde w(t) = A_{-1} w(t) + B^{(e)} f(t,\cdot)$ for all $t \geq 0$, as
  can be seen by integrating \eqref{PerturbationEquation2} over
  $[0,1]$ as a $(\Xscr_w)_{-1}$-valued function. Since also $\tilde w
  = w'$ (derivative computed in the space $(\Xscr_w)_{-1}$), we
  conclude that $w$ equals the unique classical solution of
  \eqref{PerturbationEquation}, with $w \in C^1(\rpluscl;\Xscr_w)$.
  
  It now follows from $A_{-1} w(t) = \tilde w(t) - B^{(e)} f(t,\cdot)$
  that $w \in C(\rpluscl;(\Xscr_w)_1)$. Therefore $G_w w(t) = 0$
  because $(X_w)_1 = \Dom{A} = \Null{G_w}$.
\end{proof}
In fact, the system node $S^{(W)}$ defines a well-posed linear system
in the usual sense of, e.g., \cite[Definition 2.7]{M-S-W:HTCCS} and
\cite[Definition 2.2.1]{OS:WPLS}:
\begin{theorem}
  \label{WebsterNodeIsWellPosedThm}
  The classical solution of \eqref{WebsterSystemNodeCauchy} satisfies
  the energy inequality
  \begin{equation}
    \label{WebsterWellPosedInEq}
    \frac{d}{dt}\norm{x(t)}_{\Xscr_w}^2 \leq \abs{ \tilde u(t)}^2 + 2 \rho \cdot \Re
    \left <x(t), \sbm{0 \\ f (t,\cdot)} \right >_{\Xscr_w} -
    \abs{\tilde y(t)}^2
  \end{equation}
  for all $t > 0$. Moreover,  the well-posedness estimate
  \begin{equation}
    \label{WebsterWellPosedInEq2}
    \norm{x(T)}_{\Xscr_w}^2 + \norm{\tilde y}_{L^2((0,T);\Yscr_w)}^2 \leq K(T)
    \left (\norm{\sbm{\psi_0 \\ \pi_0}}_{\Xscr_w}^2 + \norm{\sbm{ \tilde u \\ f}}_{L^2((0,T);\Uscr_w)}^2\right )
  \end{equation}
  holds for all $T \geq 0$ where $K(T) := 5 (\rho + 1)^{1/2} (T + 1)$.
\end{theorem}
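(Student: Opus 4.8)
The plan is to establish the differential energy inequality \eqref{WebsterWellPosedInEq} first, and then derive the integrated estimate \eqref{WebsterWellPosedInEq2} from it by an elementary Gr\"onwall-type argument. For the differential inequality, I would begin with a sufficiently regular solution, say one arising from data $\sbm{\psi_0 \\ \pi_0} \in \Zscr_w$ and $\sbm{\tilde u \\ f} \in C^2(\rpluscl;\Uscr_w)$ with the compatibility condition $G_w \sbm{\psi_0 \\ \pi_0} = \tilde u(0)$, so that Proposition~\ref{SystemNodeSolvabilityProp} applies and $\sbm{x \\ \tilde u \\ f} \in C(\rpluscl;\Dom{S^{(W)}})$ with $x \in C^1(\rpluscl;\Xscr_w)$. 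Using the state splitting $x = z + w$ from Proposition~\ref{FirstStateSplittinProp}, I would compute $\frac{d}{dt}\norm{x(t)}_{\Xscr_w}^2 = 2\Re\ipd{x'(t)}{x(t)}_{\Xscr_w}$ and substitute $x'(t) = A_{-1}x(t) + B_w \sbm{\tilde u(t)\\ f(t,\cdot)} = L_w z(t) + B^{(e)}f(t,\cdot) + (\text{terms in } (\Xscr_w)_{-1})$. The key structural input is that the boundary node $\Xi^{(W)} = (G_w, L_w, K_w)$ is scattering passive (indeed conservative when $\alpha = 0$), which by definition of scattering passivity yields the node-level energy balance $\frac{d}{dt}\norm{z(t)}_{\Xscr_w}^2 \leq \abs{\tilde u(t)}^2 - \abs{\tilde y(t)}^2$ for the $f\equiv 0$ part; the extra interior forcing $B^{(e)}f = \sbm{0\\\rho f}$ contributes exactly the cross term $2\rho\cdot\Re\ipd{x(t)}{\sbm{0\\ f(t,\cdot)}}_{\Xscr_w}$ (noting $\ipd{B^{(e)}f}{x}_{\Xscr_w} = \rho\ipd{\sbm{0\\f}}{x}_{\Xscr_w}$ under the Hilbert space identifications). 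Assembling these gives \eqref{WebsterWellPosedInEq}.

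For the passage to \eqref{WebsterWellPosedInEq2}, I would integrate \eqref{WebsterWellPosedInEq} over $[0,T]$, obtaining
\begin{equation*}
  \norm{x(T)}_{\Xscr_w}^2 + \int_0^T \abs{\tilde y(t)}^2 \, dt
  \leq \norm{\sbm{\psi_0\\\pi_0}}_{\Xscr_w}^2 + \int_0^T \abs{\tilde u(t)}^2\, dt
  + 2\rho \int_0^T \Re\ipd{x(t)}{\sbm{0\\ f(t,\cdot)}}_{\Xscr_w}\, dt.
\end{equation*}
The cross term is bounded using Cauchy--Schwarz and $2ab \leq a^2 + b^2$ by $\rho\int_0^T \bigl(\norm{x(t)}_{\Xscr_w}^2 + \norm{f(t,\cdot)}_{L^2(0,1)}^2\bigr)dt$; since $\norm{\sbm{0\\f}}_{\Xscr_w} \leq C\norm{f}_{L^2(0,1)}$ with constant controlled by the coefficient bounds \eqref{FurtherStandingAss} — and here one should be a little careful about whether the energy norm \eqref{WebstersEnergyNorm} or the $H^1\times L^2$ norm is meant, but either way the estimate is uniform — this is absorbed into the $\rho+1$ factor. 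Then $M(T) := \sup_{t\in[0,T]}\norm{x(t)}_{\Xscr_w}^2$ satisfies an inequality of the form $M(T) \leq \norm{\sbm{\psi_0\\\pi_0}}_{\Xscr_w}^2 + \norm{\sbm{\tilde u\\f}}_{L^2((0,T);\Uscr_w)}^2 + \rho\int_0^T M(t)\, dt$ (after replacing the running $x(t)$ inside by its sup over $[0,t]$), and Gr\"onwall gives $M(T) \leq e^{\rho T}\bigl(\norm{\sbm{\psi_0\\\pi_0}}_{\Xscr_w}^2 + \norm{\sbm{\tilde u\\f}}_{L^2((0,T);\Uscr_w)}^2\bigr)$; feeding this back bounds both $\norm{x(T)}_{\Xscr_w}^2$ and $\norm{\tilde y}_{L^2((0,T);\Yscr_w)}^2$. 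Finally I would collect constants to match the stated $K(T) = 5(\rho+1)^{1/2}(T+1)$, which presumably follows after using $e^{\rho T} \leq \ldots$ or, more likely, from a sharper direct integration avoiding Gr\"onwall (e.g. keeping the cross term as $2\rho\int\Re\ipd{x}{\sbm{0\\f}}$ and using a weighted Young inequality with the right constant), and then extending from the dense set of smooth compatible data to all admissible data by the density and continuity afforded by the well-posed-linear-system structure.

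The main obstacle I anticipate is \textbf{not} the energy balance itself — that is essentially bookkeeping once scattering passivity of $\Xi^{(W)}$ is invoked — but rather (i) verifying carefully that the formal computation $\frac{d}{dt}\norm{x}^2 = 2\Re\ipd{x'}{x}$ together with the substitution of $A_{-1}x + B_w(\cdot)$ is legitimate in the $(\Xscr_w)_{-1}$-$\Xscr_w$ duality for the classical (not necessarily $\Xscr_w$-differentiable in the strong sense required) solution, which is where the regularity $\sbm{x\\\tilde u\\f}\in C(\rpluscl;\Dom{S^{(W)}})$ and the scattering-passivity inequality of the underlying node must be combined exactly as in the cited references; and (ii) pinning down the explicit constant $5(\rho+1)^{1/2}(T+1)$ — this requires being deliberate about which equivalent norm on $\Xscr_w$ is in force and how $\norm{\sbm{0\\f}}_{\Xscr_w}$ compares to $\norm{f}_{L^2(0,1)}$, so that the coefficient bounds in Assumption~\ref{StandingAssumption} enter with the stated numerical factor. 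I would handle (i) by reducing to the scattering-passive node estimate of \cite{A-L-M:AWGIDDS} for the $z$-part and treating the $w$-part, which lives in $C(\rpluscl;(\Xscr_w)_1)$ by Proposition~\ref{FirstStateSplittinProp}, by a straightforward strongly-differentiable computation; and (ii) by choosing the weights in Young's inequality to produce exactly the claimed factor rather than appealing to a crude Gr\"onwall bound.
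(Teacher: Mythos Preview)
Your treatment of the energy inequality \eqref{WebsterWellPosedInEq} is correct and matches the paper's argument: split $x = z + w$ via Proposition~\ref{FirstStateSplittinProp}, use $z'(t) = L_w z(t)$ and $w'(t) = Aw(t) + B^{(e)}f(t,\cdot)$ together with $A_{-1}\rst{\Zscr_w} = L_w - BG_w$ to rewrite $2\Re\ipd{x}{x'}_{\Xscr_w}$ as $2\Re\ipd{x}{L_w x}_{\Xscr_w} + 2\Re\ipd{x}{B^{(e)}f}_{\Xscr_w}$, and then invoke the Green--Lagrange inequality $2\Re\ipd{x}{L_w x}_{\Xscr_w} + |K_w x|^2 \leq |G_w x|^2$ coming from scattering passivity of $\Xi^{(W)}$.

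The gap is in your derivation of \eqref{WebsterWellPosedInEq2} with the \emph{stated} constant $K(T) = 5(\rho+1)^{1/2}(T+1)$. Your Gr\"onwall argument yields $M(T) \leq e^{\rho T}\bigl(\norm{\sbm{\psi_0\\\pi_0}}_{\Xscr_w}^2 + \norm{\sbm{\tilde u\\f}}_{L^2((0,T);\Uscr_w)}^2\bigr)$, which grows exponentially in $T$ and cannot be reduced to the polynomial $5(\rho+1)^{1/2}(T+1)$. Your fallback suggestion of a weighted Young inequality does not escape this: splitting the cross term as $\epsilon\int_0^T\norm{x}^2 + C_\epsilon\int_0^T\norm{f}^2$ still leaves $\int_0^T\norm{x}^2$ on the right, and without an independent bound on that quantity you are back to Gr\"onwall.

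The paper obtains the polynomial constant by using the decomposition $x = z + w$ a \emph{second} time, now to produce a direct a~priori bound on $\norm{x(t)}_{\Xscr_w}$. For the boundary-controlled part $z$, scattering passivity of the node $S$ gives $\norm{z(t)}_{\Xscr_w}^2 \leq \norm{\sbm{\psi_0\\\pi_0}}_{\Xscr_w}^2 + \norm{\tilde u}_{L^2(0,t)}^2$. For the interior-forced part $w$, the variation-of-constants formula $w(t) = \int_0^t T(t-\tau)\sbm{0\\\rho f(\tau,\cdot)}\,d\tau$ together with contractivity of the semigroup and H\"older's inequality gives $\norm{w(t)}_{\Xscr_w}^2 \leq t\rho^2\norm{f}_{L^2((0,t);L^2(0,1))}^2$. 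Combining these yields $\norm{x(t)}_{\Xscr_w}^2 \leq 2\bigl(\norm{\sbm{\psi_0\\\pi_0}}_{\Xscr_w}^2 + (1+t\rho^2)\norm{\sbm{\tilde u\\f}}_{L^2((0,t);\Uscr_w)}^2\bigr)$, hence a polynomial bound on $\norm{x}_{L^2((0,T);\Xscr_w)}$. Only then is the integrated cross term $\int_0^T|\ipd{x}{B^{(e)}f}_{\Xscr_w}|\,dt$ estimated by Cauchy--Schwarz in time against this a~priori bound, producing the stated $K(T)$. The mechanism you are missing is precisely this reuse of the splitting to bound $w$ via the contraction semigroup, which is what replaces Gr\"onwall and keeps the growth polynomial.
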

\begin{proof}
  We first verify \eqref{WebsterWellPosedInEq} for the classical
  solution $x$ of \eqref{WebsterSystemNodeCauchy} for which
  $\sbm{\psi_0 \\ \pi_0} \in \Zscr_w$, $\sbm{ \tilde u \\ f } \in
  C^2(\rpluscl;\Uscr_w)$, and $G_w \sbm{\psi_0 \\ \pi_0} = \tilde
  u(0)$. Proposition~\ref{FirstStateSplittinProp} gives the
  decomposition $x(t) = z(t) + w(t) \in \Zscr_w$ for such solutions
  where $z'(t) = L_wz(t)$, $w(t) \in \Null{G_w}$, and $w'(t) = Aw(t) +
  B^{(e)} f(t,\cdot)$, we get for any $t \geq 0$
  \begin{equation}
    \label{WebsterNodeIsWellPosedPropEq1a}
    \begin{aligned}
      & \frac{d}{dt}\norm{x(t)}_{\Xscr_w}^2 + \abs{\tilde y(t)}^2
      = 2 \Re \left <x(t), z'(t) + w'(t) \right >_{\Xscr_w} + \abs{\tilde y(t)}^2 \\
      & = 2 \Re \left <x(t), L_w x(t) \right >_{\Xscr_w} + \abs{\tilde y(t)}^2  
      + 2 \Re \left <x(t), B^{(e)} f(t,\cdot) \right >_{\Xscr_w}
    \end{aligned}
\end{equation}
  since $A_{-1}\rst{\Zscr_w } = L_w - B G_w$ and $A =
  L_w\rst{\Null{G_w}}$.  Since 
  \begin{equation*}
    \tilde y(t) = \left [\CD \right ]_w \left ( \sbm{z(t)
      \\ \tilde u(t) \\ 0} + \sbm{w(t) \\ 0 \\ f(t,\cdot)} \right ) = K_w (z(t) + w(t)) =
    K_w x(t),
  \end{equation*}
  we have by the passivity of $\Xi^{(W)}$ the
  Green--Lagrange inequality
  \begin{equation*}
    2 \Re \left <x(t), L_w x(t) \right >_{\Xscr_w} + \abs{K_w x(t)}^2
    \leq \abs{G_w x(t)}^2 = \abs{\tilde u(t)}^2.
  \end{equation*}
  This, together with \eqref{WebsterNodeIsWellPosedPropEq1a}, gives
  for all $t \geq 0$ the energy estimate
  \begin{equation}
    \label{WebsterNodeIsWellPosedPropEq1}
    \frac{d}{dt}\norm{x(t)}_{\Xscr_w}^2 + \abs{\tilde y(t)}^2
    \leq  \abs{ \tilde u(t)}^2 + 2 \Re \left <x(t), B^{(e)} f(t,\cdot) \right >_{\Xscr_w}. 
  \end{equation}
  Since $B^{(e)} f (t,\cdot) = \sbm{0 \\ \rho f (t,\cdot)}$, we
  conclude that \eqref{WebsterWellPosedInEq} holds.

  To conclude \eqref{WebsterWellPosedInEq2} from
  \eqref{WebsterWellPosedInEq}, we must obtain an \emph{a priori}
  bound for $\norm{x(t)}_{\Xscr_w}$. We use again the splitting $x = z
  + w$ from Proposition~\ref{FirstStateSplittinProp}. Because
  \eqref{BoundaryControlEquation} describes the input part of the
  scattering passive system node $S$ associated to $\Xi^{(W)}$ in
  \eqref{BoundaryNode}, we get
  \begin{equation} \label{WebsterNodeIsWellPosedPropEq2}
    \norm{z(t)}_{\Xscr_w}^2 \leq \norm{\sbm{\psi_0 \\ \pi_0}}_{\Xscr_w}^2 
    + \norm{\tilde u}_{L^2(0,t)}^2
  \end{equation}  
  As in the proof of Proposition~\ref{FirstStateSplittinProp}, the
  variation of constants formula gives $w(t) = \int_{0}^t{T(t - \tau)
    \sbm{0 \\ \rho f(\tau,\cdot)}\, d\tau}$ for the solution of
  \eqref{PerturbationEquation2}.  Because $T(\cdot)$ is a contraction
  semigroup, it follows from H\"older's inequality that
  \begin{equation}\label{WebsterNodeIsWellPosedPropEq3} 
    \norm{w(t)}_{\Xscr_w}^2 \leq t \rho^2 \norm{f}_{L^2((0,t);L^2(0,1))}^2 
    = t \rho^2 \norm{f}_{L^2((0,1)\times(0,t))}^2.
  \end{equation}  
  Combining \eqref{WebsterNodeIsWellPosedPropEq2} and
  \eqref{WebsterNodeIsWellPosedPropEq3} we get 
  \begin{equation*}
    \norm{x(t)}_{\Xscr_w}^2 
 \leq  2 (\norm{z(t)}_{\Xscr_w}^2   +     \norm{w(t)}_{\Xscr_w}^2   ) 
\leq 2 ( \norm{\sbm{\psi_0 \\ \pi_0}}_{\Xscr_w}^2 
+  (1 + t\rho^2) \norm{\sbm{\tilde u \\ f}}_{L^2((0,t);\Uscr_w)}^2)
  \end{equation*}
  and thus $\norm{x}_{L^2((0,T);\Xscr_w)}^2 \leq 2 T \norm{\sbm{\psi_0
      \\ \pi_0}}_{\Xscr_w}^2 + (\rho T^2 + 2T ) \norm{\sbm{\tilde u
      \\ f}}_{L^2((0,T);\Uscr_w)}^2) \leq (\rho T^2 + 2T ) (
  \norm{\sbm{\psi_0 \\ \pi_0}}_{\Xscr_w}^2 + \norm{\sbm{ \tilde u
      \\ f}}_{L^2((0,T);\Uscr_w)}^2)$ which implies
\begin{equation*}
 \norm{x}_{L^2((0,T);\Xscr_w)} \leq (\rho + 1)^{1/2} (T + 1) (
 \norm{\sbm{\psi_0 \\ \pi_0}}_{\Xscr_w} + \norm{\sbm{\tilde u
     \\ f}}_{L^2((0,T);\Uscr_w)}).
\end{equation*}
Now we get
\begin{align*}
  & \int_0^T{\abs{\left <x(t), B^{(e)} f (t,\cdot) \right >_{\Xscr_w}} \, dt} 
  \leq \norm{x}_{L^2((0,T);\Xscr_w)} \cdot \norm{\sbm{0 \\ \rho f()}}_{L^2((0,T);\Xscr_w)} \\ 
  & \leq (\rho + 1)^{3/2} (T + 1) (\norm{\sbm{\psi_0 \\ \pi_0}}_{\Xscr_w} + \norm{\sbm{\tilde u
      \\ f}}_{L^2((0,T);\Uscr_w)}) \cdot \norm{f}_{L^2((0,1)\times (0,T))} \\ 
  & \leq (\rho + 1)^{3/2} (T + 1) (\norm{\sbm{\psi_0 \\ \pi_0}}_{\Xscr_w} 
  + \norm{\sbm{\tilde u \\ f}}_{L^2((0,T);\Uscr_w)})^2 \\ 
  & \leq 2 (\rho + 1)^{3/2} (T + 1) (\norm{\sbm{\psi_0 \\ \pi_0}}_{\Xscr_w}^2 + \norm{\sbm{\tilde u
      \\ f}}_{L^2((0,T);\Uscr_w)}^2 ).
\end{align*}
This together with \eqref{WebsterWellPosedInEq} produces
\eqref{WebsterWellPosedInEq2} provided that $\sbm{\psi_0 \\ \pi_0} \in \Zscr_w$, $\sbm{\tilde u
  \\ f } \in C^2(\rpluscl;\Uscr_w)$, and $G_w \sbm{\psi_0 \\ \pi_0} =  \tilde u(0)$.
\end{proof}

Using the well-posedness estimate of
Theorem~\ref{WebsterNodeIsWellPosedThm}, we can move from classical
solutions to more general \emph{strong solutions} of equations
\eqref{WebsterSystemNodeCauchy}.
\begin{corollary}   \label{WebsterNodeIsWellPosedCor}
  The system node $S^{(W)}$ in \eqref{WebstersSystemNode}, associated
  to the inhomogenous Webster's equation described by
  \eqref{WebstersEqBnrCtrl}---\eqref{ControlTermsH}, defines a well-posed
  linear system through equations \eqref{WebsterSystemNodeCauchy}.

  The first and the last of equations in
  \eqref{WebsterSystemNodeCauchy} have a unique \emph{strong} solution
  $x$ (in $\Xscr_w$) for any $\sbm{\psi_0 \\ \pi_0} \in \Xscr_w$ and
  $\sbm{ \tilde u\\ f} \in L^2(\rplus;\Uscr_w)$ satisfying $x \in
  C(\rpluscl;\Xscr_w) \cap W_{loc}^{1,1}(\rplus;(\Xscr_w)_{-1})$.  The
  output function satisfies $\tilde y \in L^2_{loc}(\rplus;\Yscr_w)$,
  and the well-posedness estimate \eqref{WebsterWellPosedInEq2} holds.
\end{corollary}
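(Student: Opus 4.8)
The plan is to deduce the corollary from Theorem~\ref{WebsterNodeIsWellPosedThm} by the standard density argument of well-posed linear systems theory, so that nothing beyond what has already been proved for Webster's equation is needed. First I would recall (from \cite[Definition~2.7]{M-S-W:HTCCS}, equivalently \cite[Definition~2.2.1]{OS:WPLS}) that a system node generates a well-posed linear system precisely when its classical solutions in the sense of Proposition~\ref{SystemNodeSolvabilityProp} obey, on each bounded time interval, an a~priori estimate of the form \eqref{WebsterWellPosedInEq2}. For $S^{(W)}$ the compatibility requirement that $\bigl(\sbm{\psi_0\\\pi_0},\tilde u(0),f(0,\cdot)\bigr)$ lie in $\Dom{S^{(W)}}=\Dom{S}\times L^2(0,1)$ reduces exactly to $\sbm{\psi_0\\\pi_0}\in\Zscr_w$ together with $G_w\sbm{\psi_0\\\pi_0}=\tilde u(0)$, with $f(0,\cdot)$ unconstrained; hence Theorem~\ref{WebsterNodeIsWellPosedThm} already furnishes precisely this estimate, with the explicit constant $K(T)$. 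Consequently $S^{(W)}$ generates a well-posed linear system, whose semigroup is the strongly continuous contraction semigroup $T(\cdot)$ of $S$, i.e.\ of the boundary node $\Xi^{(W)}$ in \eqref{BoundaryNode}.

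Next I would invoke the abstract description of strong solutions of a well-posed linear system (see \cite[Section~4.2]{OS:WPLS} and \cite[Section~2]{M-S-W:HTCCS}): for any $\sbm{\psi_0\\\pi_0}\in\Xscr_w$ and any $\sbm{\tilde u\\ f}\in L^2_{loc}(\rplus;\Uscr_w)$, applying the semigroup and the input map to the data produces a function $x$ with $x\in C(\rpluscl;\Xscr_w)\cap W_{loc}^{1,1}(\rplus;(\Xscr_w)_{-1})$, $x(0)=\sbm{\psi_0\\\pi_0}$, and $x'=A_{-1}x+B_w\sbm{\tilde u\\ f}$ holding in $(\Xscr_w)_{-1}$, while the output and input--output maps produce $\tilde y\in L^2_{loc}(\rplus;\Yscr_w)$; moreover this $x$ is the unique strong solution of the first and the last of equations in \eqref{WebsterSystemNodeCauchy}, since a strong solution of the homogeneous problem with zero data coincides with $t\mapsto T(t)\cdot 0=0$. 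This already delivers the existence, uniqueness, and regularity claimed in the corollary.

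It remains to propagate the estimate \eqref{WebsterWellPosedInEq2} from classical to strong solutions, which I would do by density and the linearity of the solution map. The admissible classical data $\bigl(\sbm{\psi_0\\\pi_0},\sbm{\tilde u\\ f}\bigr)$ — with $\sbm{\psi_0\\\pi_0}\in\Zscr_w$, $\sbm{\tilde u\\ f}\in C^2(\rpluscl;\Uscr_w)$ and $G_w\sbm{\psi_0\\\pi_0}=\tilde u(0)$ — are dense in $\Xscr_w\times L^2((0,T);\Uscr_w)$: one approximates $\sbm{\psi_0\\\pi_0}$ in $\Xscr_w$ by elements of the dense subspace $\Zscr_w$, approximates $\sbm{\tilde u\\ f}$ in $L^2$ by $C^2$ functions (leaving $f(0,\cdot)$ arbitrary), and then restores the scalar compatibility relation by altering the approximate input on a vanishing time interval $[0,\delta]$, at the cost of an arbitrarily small $L^2$ perturbation. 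Applying \eqref{WebsterWellPosedInEq2} to the difference of two such approximants shows that the map $\bigl(\sbm{\psi_0\\\pi_0},\sbm{\tilde u\\ f}\bigr)\mapsto\bigl(x(T),\tilde y\bigr)$ is Lipschitz, with constant $K(T)$, on this dense set; its continuous extension — which by the previous paragraph is exactly the strong-solution map — therefore satisfies \eqref{WebsterWellPosedInEq2} for all $\sbm{\psi_0\\\pi_0}\in\Xscr_w$ and $\sbm{\tilde u\\ f}\in L^2(\rplus;\Uscr_w)$.

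The only step that genuinely requires care is the compatibility-restoring perturbation in the density argument — equivalently, checking that the a~priori bound over the \emph{constrained} family of classical data still forces well-posedness; everything else is a verbatim application of the abstract machinery set up in Section~\ref{WebsterDirectSec} and in \cite{M-S-W:HTCCS,OS:WPLS}.
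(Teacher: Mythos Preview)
Your proposal is correct and follows essentially the same route as the paper: derive well-posedness of $S^{(W)}$ from the a~priori estimate \eqref{WebsterWellPosedInEq2} on classical solutions, obtain strong solutions from the abstract well-posed-systems machinery, and extend the estimate by density. The paper's proof is terser, citing \cite[Lemma~4.7.8 and Theorem~4.7.15]{OS:WPLS} directly for the passage from the estimate to well-posedness (which absorbs the compatibility-restoring step you flagged) and \cite[Definition~2.7]{M-S-W:HTCCS} for the density extension, whereas you spell these out by hand.
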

\noindent 
 Strong solutions are defined in \cite[Definition 3.8.1]{OS:WPLS} in
 the sense of mathematical systems theory. It is clear that classical
 solutions of \eqref{WebsterSystemNodeCauchy} (as given in
 Proposition~\ref{SystemNodeSolvabilityProp}) are strong solutions as
 well. Conversely, it does not make sense to say that a strong
 solution would in general satisfy equations in \eqref{WebstersDiffEq}
 for, e.g, $\sbm{ \tilde u\\ f} \notin C^2(\rpluscl;\Uscr_w)$.
\begin{proof}
That $S^{(W)}$ defines a well-posed linear system follows from
estimate \eqref{WebsterWellPosedInEq2} and \cite[Lemma 4.7.8 and
  Theorem 4.7.15]{OS:WPLS}. The existence of the strong solution
follows from the definition of the well-posed linear system; see
\cite[Definition~2.2.1]{OS:WPLS}. That the strong solution satisfies
\eqref{WebsterWellPosedInEq2} follows by density as given in
\cite[Definition~2.7]{M-S-W:HTCCS} and the discussion following it.
\end{proof}

\section{\label{ErrorDynamicsSec}Tracking error dynamics}

It is now time to discuss in a rigorous way what actually is described
in the right panel of Fig.~\ref{FigBoth}. There, both the wave
equation and Webster's equation are boundary controlled by a common
external signal, apart from averaging. More precisely, the boundary
control signal $u \in C^2(\rpluscl;L^2(\Gamma(0)))$ acts as an input
for the wave equation, and the scalar signal
\begin{equation} \label{AveragedInputFunction}
 \bar u(t) : = \frac{1}{A(0)} {\int_{ \Gamma(0) }{u d A}} \quad \text{ for } \quad  t \in \rpluscl 
\end{equation}
satisfying $\bar u \in C^2(\rpluscl;\Yscr_w)$ is used as the input for
the Webster's model. It has been shown in
\cite[Theorem~3.1]{L-M:WECAD} that the averaged solution $\bar \phi =
\bar \phi(t,s)$ in \eqref{AveragedWaveEqSol}, with $\phi$ coming from
\eqref{IntroWaveEq}, is a \emph{weak} solution $\psi = \bar \phi$ of
the problem
\begin{equation} \label{WebstersEqBnrInteriorCtrl}
  \begin{cases}
    & \psi_{tt} - \frac{c(s)^{2}}{A(s)} \frac{\partial}{\partial s}
    \left ( A(s) \frac{\partial \psi }{\partial s} \right ) 
    + \frac{2 \pi \alpha W(s) c(s)^{2} }{A(s)} \psi_t
= f(t,s)
\\   & \hfill \text{ for }   s \in (0,1) \text{ and } t \in \rplus, \\
    & - c \psi_s(t,0) + \psi_t(t,0) = 2 \sqrt{\frac{c}{\rho
        A(0)}} \, \bar u(t) \quad
    \text{ for } t \in \rplus,  \\
    & \psi(t,1) = 0 \quad  \text{ for } t \in \rplus, \quad \text{ and } \\
    & \psi(0,s) = \bar \phi(0,s), \quad \psi_t(0,s) = \bar \phi_t(0,s) \quad
    \text{ for } s \in (0,1),
\end{cases}
\end{equation}
where the additional load term $f = F + G + H \in
C(\rpluscl;L^2(0,1))$ is given by
\eqref{ControlTermsF}---\eqref{ControlTermsH} above.  By
\cite[Theorem~3.1]{L-M:WECAD}, the particular weak solution $\bar
\phi$ of \eqref{WebstersEqBnrInteriorCtrl} has extra regularity a
consequence of \eqref{WaveEqSolutionRegular}, namely
\begin{equation}     \label{AveragedSolutionRegular}
  \begin{aligned}
        & \bar \phi \in C^2(\rpluscl; L^2(0,1)) \quad \text{ and }
    \quad \bar \phi_s \in C^1(\rpluscl;L^2(0,1)).
  \end{aligned}
\end{equation}

On the other hand, the system described by
\eqref{WebstersEqBnrInteriorCtrl} and the output function $\tilde y$
defined by \eqref{WebstersEqBnrCtrlObs} can be reformulated in terms
of the scattering passive system node as
\begin{equation}\label{WebsterNodeDynamics}
 \bbm{x'(t) \\ \tilde y(t)} = S^{(W)} \sbm{x(t) \\ \bar
   u(t)\\ f(t,\cdot)} \quad \text{ and } \quad \quad x(0) = \sbm{\bar
   \phi(0,\cdot) \\ \rho \bar \phi_t(0,\cdot)}
 \end{equation}
as shown in Section~\ref{WebsterDirectSec}.   Equation
\eqref{WebsterNodeDynamics} has a \emph{unique strong solution} $x$ by
Corollary~\ref{WebsterNodeIsWellPosedCor} which is of the form $x =
\sbm{\psi \\ \pi}$ where $\psi$ solves
\eqref{WebstersEqBnrInteriorCtrl}, and $\pi = \rho \psi_t$.  
To apply the estimate \eqref{WebsterWellPosedInEq2} using
Corollary~\ref{WebsterNodeIsWellPosedCor}, we need to conclude that
the top component $\psi$ of the strong solution $x$ of
\eqref{WebsterNodeDynamics} equals $\bar \phi$ for all $t \geq 0$.
\begin{lemma} \label{LiftingWeakSolutionToStrongLemma}
Let $\Omega$ an $\Gamma(0) \subset \partial \Omega$ be defined as in
Section~\ref{IntroSec}, and let $u \in C^2(\rpluscl; L^2(\Gamma(0)))$.
By $\phi$ denote the solution of the wave equation model
\eqref{IntroWaveEq} safisfying the regularity conditions
\eqref{WaveEqSolutionRegular}, and define $y \in C(\rpluscl;
L^2(\Gamma(0)))$ by \eqref{IntroWaveObs}.  Assume that
\begin{enumerate}
 \item the function $\bar \phi$ is obtained from $\phi$ of
   \eqref{IntroWaveEq} by the averaging operator given in
   \eqref{AveragedWaveEqSol};
 \item the function $\bar u \in C^2(\rpluscl;\Yscr_w)$ is obtained
   from $u$ by \eqref{AveragedInputFunction}; the function $\bar y \in
   C^1(\rpluscl;\Yscr_w)$ is obtained similarly from $y$; and
\item the function $f \in C(\rpluscl;L^2(0,1))$ is defined as $f = F +
  G + H$ where $F$,$G$, and $H$ are given by
  \eqref{ControlTermsF}--\eqref{ControlTermsH}.
\end{enumerate}
Then $x(t) = \sbm{\bar \phi(t,\cdot) \\ \rho \bar \phi_t(t,\cdot)}$ is
the (unique) strong solution of \eqref{WebsterNodeDynamics} with the
output satisfying $\tilde y = \bar y$.
\end{lemma}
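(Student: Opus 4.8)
The plan is to verify directly that $x(t):=\sbm{\bar\phi(t,\cdot) \\ \rho\bar\phi_t(t,\cdot)}$ is a strong solution of \eqref{WebsterNodeDynamics} on each interval $[0,T]$, and then to invoke the uniqueness of strong solutions from Corollary~\ref{WebsterNodeIsWellPosedCor}. Since $\bar u\in C^2(\rpluscl;\Yscr_w)$ and $f=F+G+H\in C(\rpluscl;L^2(0,1))$, the pair $\sbm{\bar u \\ f}$ lies in $L^2((0,T);\Uscr_w)$ for every $T>0$, and $x(0)=\sbm{\bar\phi(0,\cdot) \\ \rho\bar\phi_t(0,\cdot)}\in\Xscr_w$, so Corollary~\ref{WebsterNodeIsWellPosedCor} yields a unique strong solution on $[0,T]$; these agree on overlaps by uniqueness and so determine a unique strong solution on $\rpluscl$. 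It therefore suffices to show that $x$ itself qualifies as this solution and to compute its output.

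First I would record the regularity of $x$. From \eqref{AveragedSolutionRegular} we get $\bar\phi_{tt}\in C(\rpluscl;L^2(0,1))$ and $(\bar\phi_t)_s=(\bar\phi_s)_t\in C(\rpluscl;L^2(0,1))$; together with $\bar\phi_t(t,1)=0$ (a consequence of $\bar\phi(t,1)\equiv0$) this gives $\bar\phi_t\in C(\rpluscl;H^1_{\{1\}}(0,1))$, hence $x'=\sbm{\bar\phi_t \\ \rho\bar\phi_{tt}}\in C(\rpluscl;\Xscr_w)$ and $x\in C^1(\rpluscl;\Xscr_w)$. By Lemma~\ref{TheAverageIsSolutionInSolutionSpace} --- which uses \eqref{SecondOrderTermInL2} and Lemma~\ref{ALittleBitMoreRegularityLemma} --- we have $x(t)\in\Zscr_w$ for all $t\ge0$ and $L_w x(t)+\sbm{0 \\ \rho f(t,\cdot)}=x'(t)$; moreover $x\in C(\rpluscl;\Zscr_w)$ since $\bar\phi_{ss}\in C(\rpluscl;L^2(0,1))$ by \eqref{SecondOrderTermInL2} and $A(\cdot)\in C^\infty$.

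Next I would match the boundary data. Because $\bar\phi$ is, by \cite[Theorem~3.1]{L-M:WECAD}, a weak solution of the boundary value problem \eqref{WebstersEqBnrInteriorCtrl} whose output $\tilde y$ in \eqref{WebstersEqBnrCtrlObs} (obtained by averaging \eqref{IntroWaveObs}) equals $\bar y$, and because the traces $\bar\phi_s(t,0)$ and $\bar\phi_t(t,0)$ are well defined and continuous in $t$ by the regularity above, the boundary relations at $s=0$ become, in operator form, $G_w x(t)=\bar u(t)$ and $K_w x(t)=\bar y(t)$ for all $t\ge0$; here one uses $\kappa(0)=0$ from Standing Assumption~\ref{StandingAssumption}, so that $\eta(0)=0$, $\Sigma(0)=1$ and $c(0)=c$, which makes the normalization constants in $G_w,K_w$ agree with those in \eqref{WebstersEqBnrCtrl} and \eqref{WebstersEqBnrCtrlObs}. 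Combining this with $L_w=A_{-1}\rst{\Zscr_w}+BG_w$, $\Dom{S^{(W)}}=\sbm{I \\ G_w}\Zscr_w\times L^2(0,1)$, $B_w=\bbm{B & B^{(e)}}$ and $[\CD]_w=\bbm{\CD & 0}\rst{\Dom{S^{(W)}}}$ with $\CD=\bbm{K_w & 0}\rst{\Dom{S}}$ (all from Section~\ref{WebsterDirectSec}), we get, for every $t\ge0$, that $\sbm{x(t) \\ \bar u(t) \\ f(t,\cdot)}\in\Dom{S^{(W)}}$,
\begin{align*}
A_{-1}x(t)+B_w\sbm{\bar u(t) \\ f(t,\cdot)}
&=L_w x(t)-BG_w x(t)+B\bar u(t)+B^{(e)}f(t,\cdot)\\
&=L_w x(t)+\sbm{0 \\ \rho f(t,\cdot)}=x'(t),
\end{align*}
and $[\CD]_w\sbm{x(t) \\ \bar u(t) \\ f(t,\cdot)}=K_w x(t)=\bar y(t)$. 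Hence $x$ solves the first and last equations of \eqref{WebsterNodeDynamics} with $x\in C^1(\rpluscl;\Xscr_w)$ and $\sbm{x \\ \bar u \\ f}\in C(\rpluscl;\Dom{S^{(W)}})$, which in particular makes $x$ a strong solution of \eqref{WebsterNodeDynamics} in the sense of \cite[Definition~3.8.1]{OS:WPLS}: the state equation holds everywhere in $\Xscr_w\subset(\Xscr_w)_{-1}$ and $\sbm{x(t) \\ \bar u(t) \\ f(t,\cdot)}\in\Dom{S^{(W)}}$ for every $t$. (It need not be a classical solution in the sense of Proposition~\ref{SystemNodeSolvabilityProp}, as $f$ is in general only continuous.) By the uniqueness assertion of Corollary~\ref{WebsterNodeIsWellPosedCor}, $x$ is the strong solution of \eqref{WebsterNodeDynamics}, and the computation above shows $\tilde y=\bar y$.

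I expect the main obstacle to be the boundary-matching step --- showing rigorously that the PDE-weak solution $\bar\phi$ of \eqref{WebstersEqBnrInteriorCtrl} furnished by \cite{L-M:WECAD}, equipped with the extra regularity \eqref{AveragedSolutionRegular}, realizes the boundary conditions pointwise in $t$ in the trace sense needed to read off $G_w x(t)=\bar u(t)$ and $K_w x(t)=\bar y(t)$, together with the bookkeeping of the normalization constants ($2\sqrt{c/(\rho A(0))}$ in \eqref{WebstersEqBnrCtrl}, \eqref{WebstersEqBnrCtrlObs} versus $\tfrac12\sqrt{A(0)/(\rho c(0))}$ in $G_w,K_w$) via $\kappa(0)=0$. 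The remaining abstract part --- identifying a sufficiently regular trajectory as a strong solution and invoking uniqueness --- is then routine.
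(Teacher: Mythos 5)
Your proposal is correct and follows essentially the same route as the paper: both verify that $x(t)=\sbm{\bar\phi(t,\cdot)\\ \rho\bar\phi_t(t,\cdot)}$ has the required regularity, use Lemma~\ref{TheAverageIsSolutionInSolutionSpace} together with $A_{-1}\rst{\Zscr_w}=L_w-BG_w$ to reduce the state equation to the boundary identity $G_w x(t)=\bar u(t)$ (which rests on averaging the wave equation's boundary condition and on $A'(0)=\kappa(0)=0$ from Standing Assumption~\ref{StandingAssumption}), and conclude $\tilde y=\bar y$ from $K_w x(t)=\bar y(t)$ plus uniqueness of strong solutions. The only cosmetic difference is that you derive $x\in C^1(\rpluscl;\Xscr_w)$ directly from \eqref{AveragedSolutionRegular}, whereas the paper routes it through the boundedness of the averaging operator $\mathcal A\in\BLO(H^k(\Omega);H^k(0,1))$; both are fine.
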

\noindent We remark that the result depends essentially on the
Standing Assumptions~\ref{StandingAssumption} as seen in the proof.
\begin{proof}
The proof is an extension of
Lemma~\ref{TheAverageIsSolutionInSolutionSpace}. More precisely, we
need to show that {\rm (i)} the functions $x(t) = \sbm{\bar
  \phi(t,\cdot) \\ \rho \bar \phi_t(t,\cdot)}$ and $\sbm{\bar u\\ f}$
satisfy $x(0) \in \Xscr_w$, $\sbm{\bar u\\ f} \in
L^2(\rplus;\Uscr_w)$, and $x \in C(\rpluscl;\Xscr_w) \cap
W_{loc}^{1,1}(\rplus;(\Xscr_w)_{-1})$; and that {\rm (ii)} the
dynamical equations \eqref{WebsterNodeDynamics} are satisfied with
$\tilde y = \bar y$ where the Hilbert spaces $\Uscr_w$, $\Xscr_w$,
$(\Xscr_w)_{-1}$ and the system node $S^{(W)}$ are defined in
Section~\ref{WebsterDirectSec}.

Now, it is immediate from assumptions that $\sbm{ \bar u\\ f} \in
L^2(\rplus;\Uscr_w)$. Recalling that $\Zscr_w \subset \Xscr_w$, the
inclusion $x(0) \in \Xscr_w$ follows because a stronger result $x(t)
\in \Zscr_w$ for all $t \geq 0$ has been shown in the proof of
Lemma~\ref{TheAverageIsSolutionInSolutionSpace}.

We work under the regularity assumptions \eqref{WaveEqSolutionRegular}
on the classical solution $\phi$ of
\eqref{IntroWaveEq}--\eqref{IntroWaveObs}, and hence $\sbm{ \phi
  \\ \rho \phi_t} \in C^1(\rplus; H^1(\Omega) \times L^2(\Omega))$.
The averaging operator $\mathcal A$ defined by
\eqref{AveragedWaveEqSol} satisfies $\mathcal A \in \BLO(H^k(\Omega);
H^k(0,1))$ for all $k \geq 0$ by \cite[Proposition~5.3]{L-M:WECAD}.
Thus, the averaged solution $\bar \phi(t, \cdot) = \mathcal A \phi(t,
\cdot)$ satisfies $x =\sbm{\bar \phi \\ \rho \bar \phi_t} \in
C^1(\rplus; \Xscr_w)$ since $\Xscr_w = H^1_{\{1\}}(0,1) \times
L^2(0,1)$ and $\bar \phi(t,1) = 0$ follows from the boundary condition
$\phi(t,\br) = 0$ for all $\br \in \Gamma(1)$. Because $\Xscr_w
\subset (\Xscr_w)_{-1}$ with a continuous embedding (see
Definition~\ref{SysNodeDef}), we have $x \in C(\rpluscl;\Xscr_w) \cap
W_{loc}^{1,1}(\rplus;(\Xscr_w)_{-1})$ as required.

Let us first check the top row of \eqref{WebsterNodeDynamics}; i.e.,
\begin{equation} \label{TopRow}
 x'(t) = [\AB]_w \sbm{x(t) \\ \bar
   u(t)\\ f(t,\cdot)} = A_{-1} x(t) + B \bar u(t) + \sbm{0 \\ \rho f (t,\cdot)}
\end{equation}
where the operators $A_{-1}, B$ are as in
Section~\ref{WebsterDirectSec}.  Since $x(t) \in \Zscr_w$ (as is
already stated in this proof) and $A_{-1}\rst{\Zscr_w} = L_w - B G_w$,
we conclude that $A_{-1} x(t) + B u(t) + \sbm{0 \\ \rho f (t,\cdot)} =
L_w x(t) + B \left (\bar u(t) - G_w x(t) \right ) + \sbm{0 \\ \rho f
  (t,\cdot)} = x'(t) + B \left (\bar u(t) - G_w x(t) \right )$ by
Lemma~\ref{TheAverageIsSolutionInSolutionSpace}.  Thus, equation
\eqref{TopRow} holds since $\bar u(t) = G_w x(t)$ follows from the
second equation in \eqref{IntroWaveEq} as explained in
\cite[Eqs.~(3.6) and (3.8), as shown at the end of
  Section~4]{L-M:WECAD}, noting that the last two condition listed in
Standing Assumptions~\ref{StandingAssumption} hold.

It remains to treat the bottom row of \eqref{WebsterNodeDynamics}
which takes the form 
\begin{equation*}
  \tilde y(t) = [\CD]_w \sbm{x(t) \\ \bar u(t)\\ f(t,\cdot)} = K_w  x(t).
\end{equation*}
Similarly as above for the input equation $\bar u(t) = G_w x(t)$, we
observe that $\bar y(t) = K_w x(t)$ as well. Hence $\tilde y = \bar y$
follows, and the proof is complete.
\end{proof}

For the rest of the section, we denote by $\sbm{\psi \\ \rho \psi_t}$
the unique solution of \eqref{WebsterNodeDynamics} with $f \equiv 0$
and output $\tilde y$, referring to the left panel in
Fig.~\ref{FigBoth}. By Lemma~\ref{LiftingWeakSolutionToStrongLemma},
the function $\sbm{\bar \phi \\ \rho \bar \phi_t}$ is the unique
solution of \eqref{WebsterNodeDynamics} with $f = F + G + H \in
C(\rpluscl;L^2(0,1))$ and output $\bar y$, referring to the right
panel in Fig.~\ref{FigBoth}.  By subtracting the model equations for
$\psi$ and $\bar \phi$ from each other, we get the equations for the
tracking error. Indeed, because both $\sbm{\psi \\ \rho \psi_t}$ and
$\sbm{\bar \phi \\ \rho \bar \phi_t}$ are strong solutions in the
sense of Corollary~\ref{WebsterNodeIsWellPosedCor}, the tracking error
$\tilde e := \sbm{e \\ \rho e_t} = \sbm{\psi - \bar \phi \\ \rho (\psi
  - \bar \phi)_t}$ is the unique strong solution of the \emph{tracking
  error model}
\begin{equation}\label{TrackingErrorModel}
 \bbm{\tilde e'(t) \\ \tilde y(t) - \bar y(t)} = S^{(W)} \sbm{\tilde
   e(t) \\ 0 \\ F(t,\cdot) + G(t,\cdot) + H(t,\cdot)} \quad \text{ and } \quad \quad \tilde e(0) = \bbm{0 \\ 0}.
 \end{equation}
Now, the tracking error can be estimated for $T \geq 0$ by using the
well-posedness estimate \eqref{WebsterWellPosedInEq2} for strong
solutions, given in Theorem~\ref{WebsterNodeIsWellPosedThm}:
\begin{equation}
  \label{FirstErrorEstimate}
  \begin{aligned}
    & \norm{\tilde e(T)}_{\Xscr_w}^2 + \norm{\tilde y - \bar y}_{L^2((0,T);\Yscr_w)}^2 \\ 
    & \leq 5 (\rho + 1)^{1/2} (T + 1) \cdot  \norm{F + G + H}^2_{L^2((0,T);L^2(0,1))}.
  \end{aligned}
\end{equation}

It remains to translate \eqref{FirstErrorEstimate} to our first
\emph{a posteriori} estimate recalling the norm of $\Xscr_w$ in
\eqref{WebstersEnergyNorm} that was used for deriving
\eqref{FirstErrorEstimate}.
\begin{theorem}
  \label{MainTheorem2}
  Let the sets $\Omega$, $\Gamma$, and $\Gamma(s)$ for $s \in [0,1]$
  be defined as in Section~\ref{IntroSec}, and assume that the
  Standing Assumptions~\ref{StandingAssumption-1} and
  \ref{StandingAssumption} hold.  Moreover, assume the following:
\begin{enumerate}
\item   \label{MainTheorem2FirstAss}
  Let $u \in C^2(\rpluscl; L^2(\Gamma(0)))$, and define its spatial
  average $\bar u$ by \eqref{AveragedInputFunction}. 
\item \label{MainTheorem2CompatibilityAss} Let $\phi_0 \in
  H^1(\Omega)$ with $\phi\rst{\Gamma(0)} = 0$, $\Delta \phi_0 \in
  L^2(\Omega)$, and $\frac{\partial \phi_0}{\partial
    \bnu}\rst{\Gamma(0)\cup\Gamma} \in L^2(\Gamma(0)\cup\Gamma)$. Let
  $p_0 \in H^1(\Omega)$ with $\phi\rst{\Gamma(0)} = 0$, and assume
  that the compatibility condition with the input function $u$ holds:
  \begin{equation*}
    c \frac{\partial \phi_0}{\partial \bnu}(\br) + \rho^{-1} p_0(\br)
    = 2 \sqrt{\tfrac{c}{\rho A(0)}} \, u(0,\br) \quad \text{ for all }
    \quad \br \in \Gamma(0).
  \end{equation*}

\item By $\phi:\rpluscl \times \Omega \to \R$ denote the
  solution\footnote{As explained in \cite[Theorem~5.1]{A-L-M:AWGIDDS}
    for $\alpha > 0$ and \cite[Corollary~5.2]{A-L-M:AWGIDDS} for
    $\alpha = 0$.} of the wave equation model \eqref{IntroWaveEq}
  Define the output $y$ by \eqref{IntroWaveObs}.
\item Define the spatially averaged version $\bar \phi$ of $\phi$ by
  \eqref{AveragedWaveEqSol}. Similarly with $\bar u$, define $\bar y$
  in terms of $y$.
\item  \label{MainTheorem2LastAss} By $\psi:\rpluscl \times [0,1] \to \R$ denote the
  solution\footnote{As explained in
    \cite[Theorem~4.1]{A-L-M:AWGIDDS}.}  of the generalised Webster's
  equation \eqref{IntroWebstersEq} with the input $\tilde u = \bar u$,
  and define the output $\tilde y$ by \eqref{IntroWebstersEqObs}.
\end{enumerate}
  Then the tracking error $e = \psi - \bar \phi$, as described by the
  left panel of Fig.~\ref{FigBoth}, is bounded from above for all $T
  \geq 0$ by the inequality
  \begin{align*}
    \norm{\left ( \psi - \bar \phi \right ) (T,\cdot) }_{H^1(0,1)} & +
     \norm{\left ( \psi_t - \bar \phi_t \right ) (T,\cdot) }_{L^2(0,1)} 
    + \norm{\tilde y - \bar y}_{L^2(0,T)}
\\ & \leq 4 C_{\Omega} \rho^{-1/2} (\rho + 1)^{3/4} (T+ 1)^{1/2} \norm{F + G + H}_{L^2((0,T) \times (0,1))}
  \end{align*}
 where the constant $C_{\Omega}$ given by
\begin{equation} \label{MainTheorem2ConstantDef}
  C_{\Omega}^2 =  \frac{2}{\min_{s \in [0,1]}(A(s),
A(s)/c(s)^2)} + 1, \quad c(s) = c \Sigma(s), 
\end{equation}
depends only on the geometry of $\Omega$, and the functions $F$, $G$,
and $H$ are given by \eqref{ControlTermsF} -- \eqref{ControlTermsH} in
terms of solution $\phi$ of \eqref{IntroWaveEq} and the problem data.
\end{theorem}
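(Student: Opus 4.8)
The plan is to translate the abstract well-posedness estimate \eqref{FirstErrorEstimate} into the concrete Sobolev-norm statement of the theorem, essentially by unwinding definitions. First I would invoke Lemma~\ref{LiftingWeakSolutionToStrongLemma}, whose hypotheses are exactly the Standing Assumptions together with (i)--(v) of the theorem, to conclude that $\sbm{\bar\phi \\ \rho\bar\phi_t}$ is the unique strong solution of \eqref{WebsterNodeDynamics} with $f=F+G+H$ and output $\bar y$; likewise $\sbm{\psi \\ \rho\psi_t}$ is the strong solution of the same node equations with $f\equiv0$ and output $\tilde y$ by Corollary~\ref{WebsterNodeIsWellPosedCor}. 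Subtracting, and using linearity of the system node (both are strong solutions in the sense of Corollary~\ref{WebsterNodeIsWellPosedCor}), the tracking error $\tilde e=\sbm{e \\ \rho e_t}$ is the unique strong solution of the tracking error model \eqref{TrackingErrorModel}, to which the well-posedness estimate \eqref{WebsterWellPosedInEq2} applies with zero initial state and zero boundary input, giving \eqref{FirstErrorEstimate}.

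The second and main step is the norm conversion. The estimate \eqref{FirstErrorEstimate} is stated in the energy norm $\|\cdot\|_{\Xscr_w}$ of \eqref{WebstersEnergyNorm}, whereas the theorem asks for the plain norms $\|(\psi-\bar\phi)(T,\cdot)\|_{H^1(0,1)}$ and $\|(\psi_t-\bar\phi_t)(T,\cdot)\|_{L^2(0,1)}$. So I would compare the two: from \eqref{WebstersEnergyNorm},
\begin{equation*}
 \|\tilde e(T)\|_{\Xscr_w}^2 = \tfrac12\Bigl(\rho\!\int_0^1 |e_s(T,s)|^2 A(s)\,ds + \tfrac{1}{\rho c^2}\!\int_0^1|e_t(T,s)|^2 A(s)\Sigma(s)^{-2}\,ds\Bigr),
\end{equation*}
and, using $A(s)\ge\min_s A(s)$, $A(s)\Sigma(s)^{-2}=A(s)/c(s)^2\cdot c^2\ge c^2\min_s(A(s)/c(s)^2)$ together with the Poincar\'e-type bound $\sqrt2\,\|z_1\|_{L^2}\le\|z_1'\|_{L^2}$ valid on $H^1_{\{1\}}(0,1)$ (which lets $\|e_s\|_{L^2}^2$ control $\|e\|_{H^1}^2$ up to a constant), one gets a lower bound $\|\tilde e(T)\|_{\Xscr_w}^2 \ge c_0^{-1}\rho\bigl(\|e(T,\cdot)\|_{H^1(0,1)}^2 + \rho^{-2}c^{-2}\|e_t(T,\cdot)\|_{L^2(0,1)}^2\bigr)$ for a geometry-dependent $c_0$; a bit of bookkeeping shows this is exactly where the constant $C_\Omega^2 = \frac{2}{\min_s(A(s),A(s)/c(s)^2)}+1$ of \eqref{MainTheorem2ConstantDef} comes from, after also absorbing the $\rho^{-1/2}$ and the $\|\tilde y-\bar y\|_{L^2(0,T)}$ term (whose norm in $\Yscr_w=\C$ is already the absolute-value norm, so it passes through unchanged). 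Plugging these comparisons into \eqref{FirstErrorEstimate} and taking square roots — noting $\sqrt{a+b}\le\sqrt a+\sqrt b$ to split the left side into the three summands — turns $\sqrt{5(\rho+1)^{1/2}(T+1)}\cdot\|F+G+H\|$ into $4C_\Omega\rho^{-1/2}(\rho+1)^{3/4}(T+1)^{1/2}\|F+G+H\|_{L^2((0,T)\times(0,1))}$, the extra powers of $(\rho+1)$ and the factor $4$ being generous constants that swallow $\sqrt5$ and the various $\sqrt2$'s.

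The main obstacle is not conceptual but the careful tracking of constants: one must make sure every inequality goes the right way (energy norm bounded \emph{below} by the target norms, so that an \emph{upper} bound on the energy norm yields an upper bound on the target), and that the Poincar\'e constant, the minima of $A$ and $A/c^2$, and the $\rho$-dependent weights combine precisely into $C_\Omega$ and the stated power $\rho^{-1/2}(\rho+1)^{3/4}$. A minor additional point to check is that $\|F+G+H\|_{L^2((0,T);L^2(0,1))} = \|F+G+H\|_{L^2((0,T)\times(0,1))}$, which is just Fubini, and that $F+G+H\in C(\rpluscl;L^2(0,1))$ (already recorded in Section~\ref{BackgroundSection}) so the right-hand side is finite. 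Everything else is a direct substitution, so I would present the conversion as a short computation followed by the assembled inequality.
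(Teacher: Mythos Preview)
Your approach is exactly the paper's: invoke \eqref{FirstErrorEstimate} (which the paper derives in the text just before the theorem via Lemma~\ref{LiftingWeakSolutionToStrongLemma} and Corollary~\ref{WebsterNodeIsWellPosedCor}, as you recap), then convert the energy norm \eqref{WebstersEnergyNorm} to the plain $H^1\times L^2$ norm using the lower bounds on $A$ and $A/c(\cdot)^2$ together with the Poincar\'e inequality on $H^1_{\{1\}}(0,1)$, and finally take square roots. Two bookkeeping slips to fix: in the energy norm you must remember $z_2=\rho e_t$, not $e_t$, so the second integral carries $\rho/c^2$ rather than $1/(\rho c^2)$; and for the last step the relevant inequality is $(a+b+c)^2\le 3(a^2+b^2+c^2)$ (as the paper uses), since $\sqrt{a+b}\le\sqrt a+\sqrt b$ goes the wrong way for passing from a bound on the sum of squares to a bound on the sum.
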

\begin{proof}
We observe that $\phi$ has the regularity required in
\eqref{StandingSmoothness} since it is part of
\cite[Theorem~5.1]{A-L-M:AWGIDDS} for classical solutions. Hence, all
that has been stated above about $\bar \phi$ is at our disposal.
Recalling the energy norm \eqref{WebstersEnergyNorm} of $\Xscr_w$, we
get
\begin{equation*}
\begin{aligned}
  & \norm{\left ( \psi_s - \bar \phi_s \right )(T, \cdot)}^2_{L^2(0,1)} + \norm{\left (\psi_t - \bar \phi_t \right )(T, \cdot)}^2_{L^2(0,1)} \\
& \leq C_1 \norm{\sbm{(\psi - \bar \phi)(T, \cdot) \\ \rho (\psi_t
  - \bar \phi_t)(T, \cdot)}}^2_{\Xscr_w}
\end{aligned}
\end{equation*}
where $C_1^{-1} := \rho/2 \cdot \min_{s \in [0,1]}(A(s),
A(s)/c(s)^2)$, $c(s) = c \Sigma(s)$. Thus, using \eqref{FirstErrorEstimate}, we get
\begin{equation}
\begin{aligned}
 & \norm{\left ( \psi - \bar \phi \right )(T, \cdot)}^2_{H^1(0,1)} + \norm{\left (\psi_t - \bar \phi_t \right )(T, \cdot)}^2_{L^2(0,1)}
+  \norm{\tilde y - \bar y}^2_{L^2(0,T)} \\
& \leq C_2 \left (\norm{\sbm{(\psi - \bar \phi)(T, \cdot) \\ \rho (\psi_t
  - \bar \phi_t)(T, \cdot)}}^2_{\Xscr_w} + \norm{\tilde y - \bar y}^2_{L^2(0,T)} \right ) \\
& \leq  5 C_2 (\rho + 1)^{1/2} (T + 1)
\cdot  \norm{F + G + H}^2_{L^2((0,T);L^2(0,1))}
\end{aligned}
\end{equation}
where $C_2 = C_1 + 1$. Taking the square root of both sides and using
$(a + b+ c)^2 \leq 3 \left (a^2 + b^2 + c^2 \right )$ gives
\begin{equation*}
  \begin{aligned}
    & \norm{\left ( \psi - \bar \phi \right ) (T,\cdot) }_{H^1(0,1)} +
    \norm{\left ( \psi_t - \bar \phi_t \right ) (T,\cdot) }_{L^2(0,1)}
    + \norm{\tilde y - \bar y}_{L^2(0,T)} \\
    &  \leq \sqrt{15 C_2} (\rho +
    1)^{1/4} (T+ 1)^{1/2} \norm{F + G + H}_{L^2((0,T) \times (0,1))}
  \end{aligned}
\end{equation*}
which gives the claim.
\end{proof}

\section{\label{PosteriorSection}A posteriori error estimate}

In essence, the following Theorem~\ref{MainTheorem3} follows from
Theorem~\ref{MainTheorem2} by estimating the functions $F$, $G$, and
$H$ in terms of $\phi$ and the problem data. 

 By $f\rst{\Gamma}$ denote the Dirichlet trace of a function $f$
 defined on $\Omega$. Define the Hilbert space
\begin{equation*}
  E(\Omega) = \left \{ f \in H^1(\Omega) : \Delta f \in L^2(\Omega),
  f\rst{\Gamma} \in H^1(\Gamma) \right \},
\end{equation*}
equipped with the norm
\begin{equation*}
  \norm{f}^2_{E(\Omega)} = \norm{f}^2_{H^1(\Omega)} + \norm{\Delta f}^2_{L^2(\Omega)} + 
  \norm{f\rst{\Gamma}}^2_{H^1(\Gamma)}.
\end{equation*}
Recall that $\Gamma \subset \partial \Omega$ denotes the walls of the tube
$\Omega$, excluding the ends $\Gamma(0)$ and $\Gamma(1)$.

\begin{theorem}
  \label{MainTheorem3}
  Let the sets $\Omega$, $\Gamma$, and $\Gamma(s)$ for $s \in [0,1]$
  be defined as in Section~\ref{IntroSec}, and assume that the
  Standing Assumptions~\ref{StandingAssumption-1} and
  \ref{StandingAssumption} hold.  Make the same assumptions
  \eqref{MainTheorem2FirstAss}--\eqref{MainTheorem2LastAss} on signals
  $u$, $\bar u$, $y$, $\bar y$, $\tilde y$, and solutions $\phi$,
  $\bar \phi$ as in Theorem~\ref{MainTheorem2}. Denote by $\bar{\bar
    \phi}$ the extension of the averaged solution $\bar \phi$ to all
  of $\clos{\Omega}$, given by
\begin{equation} \label{DilationOfAverage}
  \bar{\bar \phi} (\cdot, \br) = \bar \phi(\cdot,s) \quad \text{ for all } \quad \br
  \in \clos{\Gamma(s)}, \quad s \in [0,1].
\end{equation}
  Then the tracking error $e = \psi - \bar \phi$, as described by the
  left panel of Fig.~\ref{FigBoth}, is bounded  by the inequality
  \begin{multline*}
\norm{\left ( \psi - \bar \phi \right ) (T,\cdot) }_{H^1(0,1)}  +
    \norm{\left ( \psi_t - \bar \phi_t \right ) (T,\cdot) }_{L^2(0,1)} 
    + \norm{\tilde y - \bar y}_{L^2(0,T)}\\
  \begin{aligned}
     \leq & 7 C_{\Omega} \rho^{-1/2} (\rho + 1)^{3/4} (T+ 1)^{1/2} \\
     \cdot & \left ( 
     \left ( \norm{A'}_{L^\infty(0,1)} 
    + \norm{A''}_{L^\infty(0,1)} \right ) C_{\mathcal F} \left \|\phi - \bar{\bar \phi} \right \|_{L^2((0,T);E(\Omega))}  \right . \\
    & + \norm{\max{(\kappa, \kappa')}}^2_{L^\infty(0,1)}  C_{\mathcal H, 1} \, \left \| \nabla \left ( \phi - \bar{\bar \phi}\right ) \right \|_{L^2([0,T] \times \Omega; \C^3)} \\
    &  +  \alpha \left ( C_{\mathcal G} +  C_{\mathcal H , 2} \right )
     \left \| \left (\phi - \bar{\bar  \phi} \right )_t \right \|_{L^2((0,T);H^1(\Omega))}  \\
    & \left . +   \norm{\kappa}_{L^\infty(0,1)} C_{\mathcal H, 3}  \left \|  \Delta \phi - \overline{\overline{\Delta \phi}}  \right \|_ {L^2([0,T] \times \Omega)}  \right )   
  \end{aligned}
  \end{multline*}
 for all $T \geq 0$ where
\begin{equation*}
  \left ( \overline{\overline{\Delta \phi}} \right )(t,\br) := \mathcal A(\Delta \phi (t,\cdot))(s) \quad  \text{for all} \quad  \br \in \Gamma(s), \quad s \in [0,1], 
  \quad \text{and} \quad  t \in \rpluscl, 
\end{equation*}
the constant $C_{\Omega}$ is given by
 \eqref{MainTheorem2ConstantDef}, the constants $C_{\mathcal F}$ and
 $C_{\mathcal G}$ are as given in Proposition~\ref{FGProp}, and
the constants $C_{\mathcal H , 1}$, $C_{\mathcal H , 2}$, and $C_{\mathcal H , 3}$
are as given in Proposition~\ref{HProp}.

All of the constants on the right hand side depend only on the domain
$\Omega$. 
\end{theorem}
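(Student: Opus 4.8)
The plan is to use Theorem~\ref{MainTheorem2} as the essential input and to reduce the statement to controlling the single quantity $\norm{F+G+H}_{L^2((0,T)\times(0,1))}$. Since the hypotheses \eqref{MainTheorem2FirstAss}--\eqref{MainTheorem2LastAss} and the Standing Assumptions are exactly those of Theorem~\ref{MainTheorem2}, that theorem already bounds the left-hand side of the claimed inequality by $4C_\Omega\rho^{-1/2}(\rho+1)^{3/4}(T+1)^{1/2}\norm{F+G+H}_{L^2((0,T)\times(0,1))}$, so it remains only to estimate $F$, $G$, $H$ from \eqref{ControlTermsF}--\eqref{ControlTermsH} by the four transversal-variation quantities appearing on the right-hand side. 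First I would record that these quantities are well defined with finite norms: $\overline{\overline{\Delta\phi}}\in C(\rpluscl;L^2(\Omega))$ because $\Delta\phi\in C(\rpluscl;L^2(\Omega))$ by \eqref{WaveEqSolutionRegular} and $\mathcal A\in\BLO(L^2(\Omega);L^2(0,1))$; $\bar{\bar\phi}(t,\cdot)\in E(\Omega)$ follows from Lemma~\ref{ALittleBitMoreRegularityLemma} (so $\bar\phi(t,\cdot)\in H^2(0,1)$), the smoothness of the tubular coordinates, and $\mathcal A\in\BLO(H^k(\Omega);H^k(0,1))$ from \cite[Proposition~5.3]{L-M:WECAD}; and $\phi(t,\cdot)\in E(\Omega)$, which follows from the regularity of the classical solution of \eqref{IntroWaveEq} under the standing hypotheses.

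The heart of the argument is the triangle inequality $\norm{F+G+H}_{L^2((0,T)\times(0,1))}\le\norm{F}+\norm{G}+\norm{H}$ followed by Propositions~\ref{FGProp} (for $F$ and $G$) and \ref{HProp} (for $H$). For $F$ and $G$ the decisive observation is that the bracket $\bar\phi(s)-\tfrac1{2\pi}\int_0^{2\pi}\phi(s,R(s),\theta)\,d\theta$ is exactly the $\partial\Gamma(s)$-average of the wall trace of $\bar{\bar\phi}-\phi$, so both $F$ and $G$ vanish identically when $\phi=\bar{\bar\phi}$; differentiating this bracket in $s$ brings out the factor $\norm{A'}_{L^\infty(0,1)}+\norm{A''}_{L^\infty(0,1)}$ (from $\tfrac{\partial}{\partial s}(A'(s)\cdot)$ in \eqref{ControlTermsF}) and uses the wall trace in $H^1(\Gamma)$, i.e.\ the $E(\Omega)$-norm, whereas differentiating it in $t$ brings out the factor $\alpha$ (from \eqref{ControlTermsG}) and uses only the $H^1(\Omega)$-norm of $(\phi-\bar{\bar\phi})_t$ via the ordinary trace theorem. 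For $H$ one treats the three summands of \eqref{ControlTermsH} separately: the second summand is rewritten, using the defining identity $\int_{\Gamma(s)}E\,dA=0$ (a direct computation from \eqref{SSCFError} and $\Sigma(s)^{-2}=1+\tfrac14\eta(s)^2$), as $\tfrac1{A(s)}\int_{\Gamma(s)}E(\Delta\phi-\overline{\overline{\Delta\phi}})\,dA$, whence Cauchy--Schwarz over $\Gamma(s)$ together with $\abs{E}=O(\abs{\kappa})$ gives the $\norm{\kappa}_{L^\infty(0,1)}C_{\mathcal H,3}$ term; the third summand is rewritten, using $\int_0^{2\pi}\cos\theta\,d\theta=0$ (which kills the $\theta$-independent contribution of $\bar{\bar\phi}$), as an integral of $(\phi-\bar{\bar\phi})_t$ against $\cos\theta$ over $\partial\Gamma(s)$, and the trace theorem gives the $\alpha C_{\mathcal H,2}$ term; the first (curvature-dependent) summand produces the $\norm{\max(\kappa,\kappa')}^2_{L^\infty(0,1)}C_{\mathcal H,1}$ term. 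These reductions are precisely the contents of Propositions~\ref{FGProp} and \ref{HProp}, from which the constants $C_{\mathcal F}$, $C_{\mathcal G}$, $C_{\mathcal H,1}$, $C_{\mathcal H,2}$, $C_{\mathcal H,3}$ are taken.

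Finally I would assemble: add the $F$-, $G$- and $H$-bounds, group the two $\alpha$-proportional pieces (from $G$ and from the third summand of $H$, both dominated by $\norm{(\phi-\bar{\bar\phi})_t}_{L^2((0,T);H^1(\Omega))}$) into $\alpha(C_{\mathcal G}+C_{\mathcal H,2})\norm{(\phi-\bar{\bar\phi})_t}_{L^2((0,T);H^1(\Omega))}$, and multiply through by the prefactor $4C_\Omega\rho^{-1/2}(\rho+1)^{3/4}(T+1)^{1/2}$ of Theorem~\ref{MainTheorem2}; the numerical value $7$ (in place of $4$) absorbs the loss in the triangle inequality and in the regroupings (e.g.\ via $(a+b+c+d)^2\le4(a^2+b^2+c^2+d^2)$ if one keeps the estimates squared as in the proof of Theorem~\ref{MainTheorem2}). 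I expect the main obstacle to be the curvature-dependent first summand of $H$: one cannot simply replace $\nabla\phi$ by $\nabla(\phi-\bar{\bar\phi})$ there, since for a curved tube the contribution $\int_{\Gamma(s)}\tfrac1\Xi\nabla(\tfrac1\Xi)\cdot\nabla\bar{\bar\phi}\,dA$ does not vanish and must be reabsorbed --- using the explicit tube metric (planar centreline, vanishing torsion), the parity in $\theta$, and the bound $\norm{\eta}_{L^\infty}<1$ --- into the already-present curvature terms; this is exactly the computation performed in Proposition~\ref{HProp}. Everything else is bookkeeping with the constant \eqref{MainTheorem2ConstantDef} and the $L^\infty$ bounds on $A'$, $A''$, $\kappa$, $\kappa'$ guaranteed by Standing Assumptions~\ref{StandingAssumption-1} and \ref{StandingAssumption}.
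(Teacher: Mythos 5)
Your proposal follows the paper's proof exactly: Theorem~\ref{MainTheorem2} reduces the whole statement to bounding $\norm{F+G+H}_{L^2((0,T)\times(0,1))}$, and this is then done term by term via Propositions~\ref{FGProp} and \ref{HProp} (with Proposition~\ref{FopGopBoundedProp} supplying the vanishing of $\mathcal F$ and $\mathcal G$ on dilated averages), after which one only collects constants. The single small inaccuracy is your anticipated ``main obstacle'': the contribution $\int_{\Gamma(s)}\Xi^{-1}\nabla(\Xi^{-1})\cdot\nabla\bar{\bar\phi}\,\dff{A}$ in fact vanishes identically for the planar, torsion-free centreline, since it reduces to $-\kappa'(s)\,\partial_s\bar{\bar\phi}\int_{\Gamma(s)}r\cos\theta\,\dff{A}=0$, so no reabsorption into the curvature terms is needed.
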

\noindent The proof of this theorem is divided into
Propositions~\ref{FopGopBoundedProp}, \ref{FGProp}, and
\ref{HProp}.  Even though the constants in
Theorem~\ref{MainTheorem3} depend only on the domain $\Omega$, their
numerical values are difficult to obtain since they contain, e.g.,
norms of trace mappings.

For $f \in E(\Omega)$ and $g \in H^{1}(\Omega)$, define the linear operators
\begin{equation} \label{FopGopDef}
  \begin{aligned}
    & (\mathcal F f)(s) = 
    A''(s)(\mathcal{A}f - \mathcal{B} f\rst{\Gamma})+A'(s)\frac{\partial}{\partial s}
    (\mathcal{A} f-\mathcal{B} f \rst{\Gamma}) \\
& (\mathcal G g)(s) =  -\frac{2\pi W(s)}{A(s)} \left(\mathcal{A} g -\mathcal{B}g\rst{\Gamma}\right)
  \end{aligned}
\end{equation}
where we use the two averaging operators that have been introduced in \cite{L-M:WECAD}
\begin{equation*}
  (\mathcal A f)(s) := \frac{1}{A(s)} {\int_{ \Gamma(s) }{f \, d A}}\quad \text{ and } 
  (\mathcal B g\rst{\Gamma})(s) := \frac{1}{2 \pi}\int_{0}^{2 \pi} g(s,R(s),\theta) \, d \theta \quad
\end{equation*}
for all $s \in (0,1)$.  
\begin{proposition} \label{FopGopBoundedProp}
  The operators defined in \eqref{FopGopDef} satisfy $\mathcal F \in
  \BLO(E(\Omega);L^2(0,1))$  with the estimate
\begin{equation*}
     \norm{\mathcal F f}_{L^2(0,1)} \leq 
    \left ( \norm{A'}_{L^\infty(0,1)} + \norm{A''}_{L^\infty(0,1)} \right ) C_{\mathcal F} 
    \norm{f}_{E(\Omega)}  
\end{equation*}
and $\mathcal G \in \BLO(E(\Omega);H^1(0,1)) \cap
\BLO(H^1(\Omega);H^{1/2}(0,1))$.

Moreover, $\Null{\mathcal G} \cap E(\Omega) \subset \Null{\mathcal F}$,
$\Null{\mathcal G}$ is closed in $H^1(\Omega)$, and $\Null{\mathcal
  F}$ closed in $E(\Omega)$.  If $f \in H^1(\Omega)$, and $\bar{\bar
  f}$ is defined by dilation
\begin{equation*}
  \bar{\bar f} (\br) = (\mathcal A f) (s) \quad \text{ for all } \quad
  \br \in \Gamma(s) \quad \text{ and } \quad s \in (0,1)
\end{equation*}  
then $\bar{\bar f} \in \Null{\mathcal G}$.  Similarly, if $f \in
E(\Omega)$ is such that $\mathcal Af \in H^2(0,1)$ then $\bar{\bar f}
\in \Null{\mathcal G} \cap E(\Omega)$.
\end{proposition}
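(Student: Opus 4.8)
The plan is to reduce everything to the single auxiliary operator $\mathcal C := \mathcal A - \mathcal B(\,\cdot\rst{\Gamma})$, for which $(\mathcal F f)(s) = A''(s)(\mathcal C f)(s) + A'(s)(\mathcal C f)'(s)$ and $(\mathcal G g)(s) = -\tfrac{2\pi W(s)}{A(s)}(\mathcal C g)(s)$. First I would record the mapping properties of $\mathcal C$. The averaging operator satisfies $\mathcal A \in \BLO(H^k(\Omega);H^k(0,1))$ for $k=0,1$ by \cite[Proposition~5.3]{L-M:WECAD}. Parametrising the wall $\Gamma$ by $(s,\theta)\in[0,1]\times[0,2\pi]$ in the tubular coordinates of \cite{L-M:WECAD}, the induced surface measure is $\sigma(s,\theta)\,ds\,d\theta$ with $\sigma$ smooth and bounded above and below by positive constants; this uses $\norm{\eta}_{L^\infty([0,1])}<1$ and $\min_s A(s)>0$ (hence $\min_s R(s)>0$) from the Standing Assumptions~\ref{StandingAssumption-1} and \ref{StandingAssumption}. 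Since the $\theta$-average commutes with $\partial_s$ and annihilates $\partial_\theta$, Cauchy--Schwarz together with the comparability of measures gives $\mathcal B(\,\cdot\rst{\Gamma})\in\BLO(L^2(\Gamma);L^2(0,1))\cap\BLO(H^1(\Gamma);H^1(0,1))$, and interpolation yields $\mathcal B(\,\cdot\rst{\Gamma})\in\BLO(H^{1/2}(\Gamma);H^{1/2}(0,1))$. Composing with the trace maps $H^1(\Omega)\to H^{1/2}(\Gamma)$ and $E(\Omega)\to H^1(\Gamma)$ (the latter built into the $E(\Omega)$-norm), and using $E(\Omega)\hookrightarrow H^1(\Omega)$, I obtain $\mathcal C\in\BLO(E(\Omega);H^1(0,1))\cap\BLO(H^1(\Omega);H^{1/2}(0,1))$; let $C_{\mathcal F}$ be the norm of $\mathcal C\colon E(\Omega)\to H^1(0,1)$.

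The boundedness of $\mathcal F$ and $\mathcal G$ is then immediate. Because $A',A''\in C^\infty([0,1])\subset L^\infty(0,1)$ by \eqref{StandingSmoothness}, for $f\in E(\Omega)$ I bound $\norm{\mathcal F f}_{L^2(0,1)}\le\norm{A''}_{L^\infty}\norm{\mathcal C f}_{L^2}+\norm{A'}_{L^\infty}\norm{(\mathcal C f)'}_{L^2}\le(\norm{A'}_{L^\infty}+\norm{A''}_{L^\infty})\,C_{\mathcal F}\,\norm{f}_{E(\Omega)}$, which is exactly the stated estimate. Similarly $\tfrac{2\pi W}{A}\in C^\infty([0,1])$, and multiplication by a $C^\infty$ function is bounded on $H^1(0,1)$ and on $H^{1/2}(0,1)$, so composing with $\mathcal C$ gives $\mathcal G\in\BLO(E(\Omega);H^1(0,1))\cap\BLO(H^1(\Omega);H^{1/2}(0,1))$.

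For the kernel assertions, observe that $W(s)=R(s)\sqrt{R'(s)^2+(\eta(s)-1)^2}>0$ and $A(s)>0$ for every $s$ (again $\eta(s)<1$, $R(s)>0$), so the continuous function $\tfrac{2\pi W}{A}$ is bounded away from zero and $\mathcal G g=0\iff\mathcal C g=0$. If $f\in\Null{\mathcal G}\cap E(\Omega)$ then $\mathcal C f=0$ in $H^1(0,1)$, hence $(\mathcal C f)'=0$ and $\mathcal F f=A''\cdot 0+A'\cdot 0=0$, i.e.\ $\Null{\mathcal G}\cap E(\Omega)\subset\Null{\mathcal F}$; and $\Null{\mathcal G}$ is closed in $H^1(\Omega)$ while $\Null{\mathcal F}$ is closed in $E(\Omega)$, since kernels of bounded operators are closed. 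Finally, for the dilation: when $f\in H^1(\Omega)$ and $\bar{\bar f}(\br)=(\mathcal A f)(s)$ on $\clos{\Gamma(s)}$, a direct computation gives $\mathcal A\bar{\bar f}=\mathcal A f$ (averaging a function constant on each $\Gamma(s)$) and $\mathcal B\bar{\bar f}\rst{\Gamma}=\mathcal A f$ (on $\partial\Gamma(s)$ the function $\bar{\bar f}$ is the constant $(\mathcal A f)(s)$), so $\mathcal C\bar{\bar f}=0$ and $\mathcal G\bar{\bar f}=0$ --- here $\bar{\bar f}\in H^1(\Omega)$ because the dilation $u\mapsto\bar{\bar u}$ is bounded $H^1(0,1)\to H^1(\Omega)$. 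If moreover $f\in E(\Omega)$ with $\mathcal A f\in H^2(0,1)$, then also $\bar{\bar f}\in E(\Omega)$: the trace $\bar{\bar f}\rst{\Gamma}$ is $\theta$-independent and is the dilation of $\mathcal A f\in H^1(0,1)$, hence lies in $H^1(\Gamma)$; and in the tubular coordinates the Laplacian of a function of $s$ alone has the form $\Xi^2\,\partial_s^2+\Xi(\partial_s\Xi)\,\partial_s$ with $\Xi=(1-r\kappa(s)\cos\theta)^{-1}$ smooth and bounded on $\clos{\Omega}$ (the vanishing torsion of $\bg$ makes the Fermi metric diagonal, and $\norm{\eta}_{L^\infty}<1$ keeps it uniformly nondegenerate), so $\Delta\bar{\bar f}\in L^2(\Omega)$ since $\mathcal A f\in H^2(0,1)$; thus $\bar{\bar f}\in\Null{\mathcal G}\cap E(\Omega)$.

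The step I expect to be the main obstacle is this last one: making fully rigorous that the dilation operator is bounded $H^1(0,1)\to H^1(\Omega)$ and that $\Delta\bar{\bar f}\in L^2(\Omega)$ whenever $\mathcal A f\in H^2(0,1)$. Both hinge on the explicit form of the Laplacian and of the volume and surface elements in the tubular coordinate system of \cite{L-M:WECAD}; the structural facts needed there are that a planar centreline with vanishing torsion makes the longitudinal coordinate direction orthogonal to the disk cross-sections (so $\Delta$ applied to a function of $s$ produces no $\partial_r$ or $\partial_\theta$ terms), and that $\norm{\eta}_{L^\infty}<1$ together with $\min_s A(s)>0$ makes every metric coefficient smooth and bounded above and below.
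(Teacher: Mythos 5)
Your proof is correct and follows essentially the same route as the paper: both reduce $\mathcal F$ and $\mathcal G$ to the difference $\mathcal C = \mathcal A - \mathcal B(\,\cdot\rst{\Gamma})$ of the two averaging operators, rely on the boundedness of $\mathcal A$ and $\mathcal B$ on the relevant Sobolev scales (which the paper simply cites from \cite[Propositions~5.2 and 5.3]{L-M:WECAD}, whereas you rederive the $\mathcal B$ part from the surface measure $dS = W(s)\,d\theta\,ds$ and interpolation), and observe that the dilation $\bar{\bar f}$ makes the two averages coincide, so that $\mathcal C\bar{\bar f}=0$. The only point where the arguments genuinely diverge is the final membership $\bar{\bar f}\in\Null{\mathcal G}\cap E(\Omega)$: the paper cuts the wall $\Gamma$ open along a smooth curve, maps it diffeomorphically onto a square so that $\bar{\bar f}$ pulls back to a function of $s$ alone, and concludes $\bar{\bar f}\in H^1(\Gamma)$ and, ``by a similar argument,'' $\bar{\bar f}\in H^2(\Omega)\subset E(\Omega)$ when $\mathcal A f\in H^2(0,1)$; you instead verify $\Delta\bar{\bar f}\in L^2(\Omega)$ directly from the explicit tubular-coordinate form of the Laplacian. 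Both are valid; your variant lands in $E(\Omega)$ without passing through $H^2(\Omega)$ and makes explicit the metric nondegeneracy ($\norm{\eta}_{L^\infty}<1$, $\min_s A(s)>0$) that the paper's pullback argument uses silently, at the cost of invoking the Laplacian formula that the paper's appendix derives.
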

\begin{proof}
As shown in \cite[Propositions~5.2 and 5.3]{L-M:WECAD}, we have 
\begin{equation*}
  \mathcal A \in \BLO(L^2(\Omega); L^2(0,1))
  \cap\BLO(H^1(\Omega);H^1(0,1)) \text{ and } \mathcal B
  \in\BLO(H^{s}(\Gamma);H^{s}(0,1))
\end{equation*}
for all $s \in \R$.  Because the functions $A(\cdot)$ and $W(\cdot)$
are smooth and strictly positive, the norm estimates for $\mathcal F$
and $\mathcal G$ follow.  The claims about the null spaces are
evident, apart from the last one.  

 Since $\bar{\bar f}$ is constant on each $\Gamma(s)$ for $s \in
 (0,1)$, it would follow that the two averages $(\mathcal A \bar{\bar
   f})(s)$ and $(\mathcal B \bar{\bar f}\rst{\Gamma})(s)$ would
 coincide for all $s$.  Thus, formally $\bar{\bar f} \in
 \Null{\mathcal G}$.  It remains to show that $\bar{\bar f} \in
 H^1(\Omega)$ if $f \in H^1(\Omega)$, and that $\bar{\bar f} \in
 E(\Omega)$ if $f \in E(\Omega)$

  We choose a smooth curve $\bl:[0,1] \to \Gamma$ on the tube wall
  such that $\Gamma(s) \cap \bl$ consists of a single point. The cut
  the tube wall open along $\bl$, and map the surface $\Gamma
  \setminus \bl$ to the unit square $[0,1] \times (0,1)$ by a smooth
  diffeomorphism, so that the circles $\partial \Gamma(s) \setminus \{
  \bl(s) \}$ map onto $\{ s \} \times (0,1)$.  Now, it it clear that
  the extension $\tilde f(s,\xi) := \bar f(s)$ for $\xi \in (0,1)$
  satisfies $\tilde f \in H^1((0,1)^2)$ because $f \in H^1(\Omega)$
  implies $\bar f = \mathcal A f \in H^1(0,1)$ by
  \cite[Proposition~5.2]{L-M:WECAD}. Thus $\bar{\bar f} \in
  H^1(\Gamma)$ by pullback. By a similar argument, we have $\bar{\bar
    f} \in H^2(\Omega) \subset E(\Omega)$ if $\bar f \in H^2(0,1)$
  which completes the proof.
\end{proof}

With the help of the operators $\mathcal F$ and $\mathcal G$, the
forcing terms $F$ and $G$ given in \eqref{ControlTermsF} --\eqref{ControlTermsG} may be
written as
\begin{equation} \label{FGDef}
  F(t,s)  = (\mathcal F \phi(t,\cdot))(s)  
\quad \text{ and } \quad
  G(t,s)  =\alpha (\mathcal G \phi_t(t,\cdot))(s) \quad
\end{equation}
for all $(t,s) \in \rpluscl \times (0,1)$.
\begin{proposition} \label{FGProp}
  Make the same assumption as in Theorem~\ref{MainTheorem3}.  The
  forcing functions $F$ and $G$, given by
  \eqref{ControlTermsF}--\eqref{ControlTermsG}, satisfy the estimates
  \begin{align*}
    \norm{F(t,\cdot)}_{L^2(0,1)} & \leq  \left (
    \norm{A'}_{L^\infty(0,1)} + \norm{A''}_{L^\infty(0,1)} \right ) C_{\mathcal F} \,
    \left \| \left ( \phi - \bar{\bar \phi}\right ) (t,\cdot)
    \right \|_{E(\Omega)} \text{ and } \\ \norm{G(t,\cdot)}_{L^2(0,1)} & \leq
    \alpha C_{\mathcal G} \, \left \| \left ( \phi - \bar{\bar
        \phi} \right )_t (t,\cdot) \right \| _{H^1(\Omega)}
  \end{align*}
for all $t \geq 0$ where the constants $C_{\mathcal F}$ and $C_{\mathcal G}$ depend only
on the geometry of $\Omega$.
\end{proposition}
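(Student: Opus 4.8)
The plan is to obtain both bounds directly from the representation \eqref{FGDef} of the forcing terms, combined with the boundedness and kernel properties of $\mathcal F$ and $\mathcal G$ established in Proposition~\ref{FopGopBoundedProp}, after first replacing the arguments $\phi$ and $\phi_t$ by $\phi-\bar{\bar\phi}$ and $(\phi-\bar{\bar\phi})_t$. The mechanism is that $\mathcal F$ and $\mathcal G$ annihilate every function that is constant on each cross section $\Gamma(s)$; in particular they kill the dilation $\bar{\bar\phi}$ of the average $\bar\phi=\mathcal A\phi$, so this subtraction costs nothing while producing exactly the differences that appear on the right-hand sides.

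Concretely, fix $t\ge 0$. Since $\phi(t,\cdot)\in E(\Omega)$ (a consequence of the regularity \eqref{WaveEqSolutionRegular} together with the boundary conditions in \eqref{IntroWaveEq}; were it to fail, the asserted bound would be vacuous, its right-hand side being infinite) and, by Lemma~\ref{ALittleBitMoreRegularityLemma}, $\mathcal A\phi(t,\cdot)=\bar\phi(t,\cdot)\in H^2(0,1)$, the last assertion of Proposition~\ref{FopGopBoundedProp} gives $\bar{\bar\phi}(t,\cdot)\in\Null{\mathcal G}\cap E(\Omega)\subset\Null{\mathcal F}$. Hence $\phi(t,\cdot)-\bar{\bar\phi}(t,\cdot)\in E(\Omega)$ and, by linearity of $\mathcal F$,
\[
  F(t,\cdot)=\mathcal F\phi(t,\cdot)=\mathcal F\bigl(\phi(t,\cdot)-\bar{\bar\phi}(t,\cdot)\bigr),
\]
so the first estimate follows by applying the operator bound for $\mathcal F$ from Proposition~\ref{FopGopBoundedProp}, with the same constant $C_{\mathcal F}$.

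For $G$ I would argue analogously. By \eqref{WaveEqSolutionRegular}, $\phi_t(t,\cdot)\in H^1(\Omega)$; since the bounded linear operator $\mathcal A$ commutes with $\partial_t$, one has $\mathcal A\phi_t(t,\cdot)=\bar\phi_t(t,\cdot)$, whose dilation is $\bar{\bar\phi}_t(t,\cdot)=\overline{\overline{\phi_t(t,\cdot)}}$. Applying the part of Proposition~\ref{FopGopBoundedProp} that yields $\bar{\bar f}\in\Null{\mathcal G}$ for $f\in H^1(\Omega)$, with $f=\phi_t(t,\cdot)$, shows $\bar{\bar\phi}_t(t,\cdot)\in H^1(\Omega)\cap\Null{\mathcal G}$, so by \eqref{FGDef} and linearity
\[
  G(t,\cdot)=\alpha\,\mathcal G\phi_t(t,\cdot)=\alpha\,\mathcal G\bigl((\phi-\bar{\bar\phi})_t(t,\cdot)\bigr).
\]
Combining $\mathcal G\in\BLO(H^1(\Omega);H^{1/2}(0,1))$ with the continuous embedding $H^{1/2}(0,1)\hookrightarrow L^2(0,1)$ then gives the second estimate, with $C_{\mathcal G}:=\norm{\mathcal G}_{\BLO(H^1(\Omega);L^2(0,1))}$.

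I expect the only slightly delicate matter to be the bookkeeping that legitimises the two substitutions — verifying $\bar{\bar\phi}(t,\cdot)\in E(\Omega)\cap\Null{\mathcal F}$ and $\overline{\overline{\phi_t(t,\cdot)}}\in H^1(\Omega)\cap\Null{\mathcal G}$, and the (routine) interchange of the averaging and dilation operations with $\partial_t$. Once these are in place, nothing beyond the boundedness of $\mathcal F$ and $\mathcal G$ already proved in Proposition~\ref{FopGopBoundedProp} is required.
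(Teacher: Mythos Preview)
Your proposal is correct and follows essentially the same route as the paper: represent $F$ and $G$ via \eqref{FGDef}, use Proposition~\ref{FopGopBoundedProp} to see that the dilations $\bar{\bar\phi}(t,\cdot)$ and $\bar{\bar\phi}_t(t,\cdot)$ lie in the kernels of $\mathcal F$ and $\mathcal G$, and then apply the operator bounds. The paper's only additional input is an explicit citation for $\phi(t,\cdot)\rst{\Gamma}\in H^1(\Gamma)$ (hence $\phi(t,\cdot)\in E(\Omega)$), which does not follow from \eqref{WaveEqSolutionRegular} alone since $H^1(\Omega)$ only gives an $H^{1/2}$ trace; your hedge that the right-hand side would otherwise be infinite is a reasonable workaround, but the paper actually verifies this membership.
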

\begin{proof}
As shown in \cite[Theorem~5.1 and Corollary~5.2]{A-L-M:AWGIDDS}, the
unique classical solution $\phi$ of the wave equation
\eqref{IntroWaveEq} satisfies the regularity assumptions
\eqref{WaveEqSolutionRegular}. Hence, all results of
\cite[Section~5]{L-M:WECAD} can be used: in particular, that
$\phi\rst{\Gamma} \in C(\R_+;H^1(\Gamma))$ by
\cite[Proposition~5.1]{L-M:WECAD} and, hence, $\phi(t,\cdot) \in
E(\Omega)$ for all fixed $t \in \rpluscl$.  Further, by
Lemma~\ref{ALittleBitMoreRegularityLemma} we get that $\bar \phi \in
H^2(0,1)$.  Together with \eqref{FGDef}, we get
  \begin{align*}
  & \norm{F(t,\cdot)}_{L^2(0,1)} 
    \leq  \left \| \mathcal F \left ( \phi(t,\cdot) 
      - \bar{\bar \phi}(t,\cdot) \right ) \right \|_{L^2(0,1)}  
    + \norm{\mathcal F \bar{\bar \phi}(t,\cdot)}_{L^2(0,1)} \\
& \leq  C_{\mathcal F} 
    \left ( \norm{A'}_{L^\infty(0,1)} + \norm{A''}_{L^\infty(0,1)} \right ) \cdot
    \left \| \left ( \phi - \bar{\bar \phi}\right ) (t,\cdot) \right \|_{E(\Omega)}
\end{align*}
because $\mathcal F \bar{\bar \phi} \equiv 0$ by
Proposition~\ref{FopGopBoundedProp}. The estimate involving $G$ is
proved similarly, noting that the dissipativity constant $\alpha$ is
always nonnegative.
\end{proof}

It remains to treat the term $H(\cdot)$ given in \eqref{ControlTermsH}:
\begin{proposition} \label{HProp}
  Make the same assumption as in Theorem~\ref{MainTheorem3}. Then the
  forcing function $H$, given by \eqref{ControlTermsH}, satisfies the
  estimate
\begin{equation} \label{HPropEq1}
\begin{aligned}
& \norm{H(t,\cdot)}_{L^2(0,1)} \leq 
\norm{\max{(\kappa, \kappa')}}_{L^\infty(0,1)} 
C_{\mathcal H, 1} \, \left \| \nabla \left ( \phi - \bar{\bar \phi}\right )(t,\cdot)\right \|_{L^2(\Omega; \C^3)} \\
& + \alpha C_{\mathcal H , 2} \left \| \left ( \phi - {\bar{\bar \phi}} \right )_t (t,\cdot) \right \|_{H^1(\Omega)}
+ \norm{\kappa}_{L^\infty(0,1)} C_{\mathcal H, 3}  \left \| \left (\Delta \phi - \overline{\overline
  {\Delta \phi}}  \right )(t,\cdot)  \right \|_ {L^2(\Omega)}
\end{aligned}
\end{equation}
for all $t \geq 0$ where the constants $C_{\mathcal H, 1}$,
$C_{\mathcal H, 2}$, and $C_{\mathcal H, 3}$ depend only on the
geometry of $\Omega$.
\end{proposition}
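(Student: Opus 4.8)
The plan is to split $H$ into its three natural pieces $H=H_{\mathrm I}+H_{\mathrm{II}}+H_{\mathrm{III}}$ following the three lines of \eqref{ControlTermsH}, and to match the $\nabla\phi$-integral $H_{\mathrm I}$, the $\Delta\phi$-integral $H_{\mathrm{II}}$ and the wall-trace integral $H_{\mathrm{III}}$ with the first, the third and the second summand on the right of \eqref{HPropEq1}, respectively. As a preliminary step, exactly as in the proof of Proposition~\ref{FGProp}, the regularity \eqref{WaveEqSolutionRegular} together with \cite[Section~5]{L-M:WECAD} guarantees, for the fixed $t$, that $\phi(t,\cdot)\in E(\Omega)$, that $\nabla\phi(t,\cdot)\in L^2(\Omega;\C^3)$ and $\Delta\phi(t,\cdot)\in L^2(\Omega)$, and that $(\phi-\bar{\bar\phi})_t(t,\cdot)\in H^1(\Omega)$, so that all cross-sectional integrals and all traces on $\Gamma$ below are meaningful. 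The common device in all three estimates is that an integrand over $\Gamma(s)$ or over $\partial\Gamma(s)$ which is odd in the angular variable $\theta$, or which has vanishing cross-sectional average, is unchanged when $\phi$ is replaced by $\phi-\bar{\bar\phi}$ (respectively $\Delta\phi$ by $\Delta\phi-\overline{\overline{\Delta\phi}}$, respectively $\phi_t$ by $(\phi-\bar{\bar\phi})_t$), since $\bar{\bar\phi}$ is constant on each cross-section; after that replacement, Cauchy--Schwarz in the cross-section, the trace theorem $H^1(\Omega)\hookrightarrow L^2(\Gamma)$ and the uniform bounds \eqref{StandingSmoothness}--\eqref{FurtherStandingAss} finish each estimate. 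The factors of $\kappa$ appear because, by \eqref{SSCFError} and $\norm{\eta}_{L^\infty}<1$, one has $\norm{E}_{L^\infty(\Omega)}\le C\norm{\kappa}_{L^\infty(0,1)}$ and $\norm{\tfrac1\Xi\nabla\bigl(\tfrac1\Xi\bigr)}_{L^\infty(\Omega)}\le C\norm{\max(\kappa,\kappa')}_{L^\infty(0,1)}$.

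For $H_{\mathrm{II}}=-\tfrac1{A(s)}\int_{\Gamma(s)}E\,\Delta\phi\,dA$ I would use the defining property of the sound speed correction factor, $\int_{\Gamma(s)}\Xi^{-2}\,dA=A(s)\Sigma(s)^{-2}$ from \cite[Section~3]{L-M:WECAD}, which is precisely the statement $\int_{\Gamma(s)}E\,dA=0$; hence $\int_{\Gamma(s)}E\,\overline{\overline{\Delta\phi}}\,dA=0$, so $H_{\mathrm{II}}(t,s)=-\tfrac1{A(s)}\int_{\Gamma(s)}E\,(\Delta\phi-\overline{\overline{\Delta\phi}})\,dA$, and then Cauchy--Schwarz, $\norm{E}_{L^\infty}\le C\norm{\kappa}_{L^\infty}$, the lower bound on $A$ and integration in $s$ give the $\norm{\kappa}_{L^\infty}C_{\mathcal H,3}$-term. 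For $H_{\mathrm{III}}$, since $\int_0^{2\pi}\cos\theta\,d\theta=0$ and $\bar{\bar\phi}_t$ is $\theta$-independent on $\partial\Gamma(s)$, one may subtract $\bar{\bar\phi}_t$ inside the angular integral; Cauchy--Schwarz in $\theta$, the slice-wise trace bound $\int_0^1\norm{g(s,R(s),\cdot)}_{L^2(0,2\pi)}^2\,ds\le C\norm{g\rst{\Gamma}}_{L^2(\Gamma)}^2\le C'\norm{g}_{H^1(\Omega)}^2$ with $g=(\phi-\bar{\bar\phi})_t$, and $\norm{\alpha W\eta/A}_{L^\infty}<\infty$ then give the $\alpha\,C_{\mathcal H,2}$-term.

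\textbf{The main obstacle is $H_{\mathrm I}=\int_{\Gamma(s)}\tfrac1\Xi\nabla\bigl(\tfrac1\Xi\bigr)\cdot\nabla\phi\,dA$.} Passing to the coordinates $(s,r,\theta)$ used in \cite{L-M:WECAD}, in which the metric is diagonal, the two cross-sectional components of the integrand, the ones carrying $\phi_r$ and $\phi_\theta$, already see only $\phi-\bar{\bar\phi}$ because $\bar{\bar\phi}_r=\bar{\bar\phi}_\theta=0$; only the longitudinal component $-\int_{\Gamma(s)}\Xi\,r\,\kappa'(s)\cos\theta\,\phi_s\,dA$ leaves a remainder. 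Writing $\phi_s=(\phi-\bar{\bar\phi})_s+\bar\phi_s$, the first part is controlled as before (indeed $(\phi-\bar{\bar\phi})_s=\Xi^{-1}\langle\nabla(\phi-\bar{\bar\phi}),\mathbf t\rangle$ is dominated by $|\nabla(\phi-\bar{\bar\phi})|$), while the second yields the residual
\[
 -\kappa'(s)\,\bar\phi_s(s)\int_0^{R(s)}r^2\Bigl(\int_0^{2\pi}\frac{\cos\theta}{1-r\kappa(s)\cos\theta}\,d\theta\Bigr)dr
 = -\kappa(s)\kappa'(s)\,\bar\phi_s(s)\int_0^{R(s)}r^3\Bigl(\int_0^{2\pi}\frac{\cos^2\theta}{1-r\kappa(s)\cos\theta}\,d\theta\Bigr)dr ,
\]
where the second form uses $\int_0^{2\pi}\cos\theta\,d\theta=0$ once more and exhibits the residual as being of order $\kappa\kappa'$, which is what forces the square on $\norm{\max(\kappa,\kappa')}_{L^\infty}$ in Theorem~\ref{MainTheorem3}. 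The hard part will be to dispose of this residual without reintroducing $\phi$ on the right-hand side; I expect to do this either by estimating $H_{\mathrm I}+H_{\mathrm{II}}$ jointly, so that the $\bar\phi_s$-residual cancels the corresponding longitudinal remainder produced when $\Delta\phi$ is written out in the same coordinates inside $H_{\mathrm{II}}$, or else by integrating $\tfrac12\int_{\Gamma(s)}\nabla(\Xi^{-2})\cdot\nabla\phi\,dA$ by parts in the cross-sectional variables, which, after choosing the integration constant equal to $\Sigma(s)^{-2}$, turns $H_{\mathrm I}$ into an $E\,\Delta\phi$-integral over $\Gamma(s)$ and an $E\,\partial_n\phi$-integral over $\partial\Gamma(s)$, to both of which the arguments for $H_{\mathrm{II}}$ and $H_{\mathrm{III}}$ apply. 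Once the replacement by $\nabla(\phi-\bar{\bar\phi})$ is in place, Cauchy--Schwarz in the cross-section (the weighted volume element $\Xi^{-1}r\,ds\,dr\,d\theta$ being comparable to $ds\,dr\,d\theta$ because $\eta<1$) together with the bounds on $A$ and on $\tfrac1\Xi\nabla\bigl(\tfrac1\Xi\bigr)$ gives the $\norm{\max(\kappa,\kappa')}_{L^\infty}C_{\mathcal H,1}$-term.

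Adding the three estimates gives \eqref{HPropEq1}; collecting constants, each of $C_{\mathcal H,1},C_{\mathcal H,2},C_{\mathcal H,3}$ is a product of a trace-operator norm, $(\min_{[0,1]}A)^{-1/2}$, $\max_{[0,1]}A$, $\norm{\Sigma^{-1}}_{L^\infty}$, $\norm{W\eta}_{L^\infty}$ and similar quantities depending only on $\Omega$, as claimed. The single step I expect to require real work is the one just flagged: identifying the cancellation, or the correct cross-sectional integration by parts, that removes the longitudinal $\bar\phi_s$-remainder of $H_{\mathrm I}$ and leaves only $\nabla(\phi-\bar{\bar\phi})$, while keeping all constants independent of $\phi$.
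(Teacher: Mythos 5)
Your handling of the second and third pieces of $H$ coincides with the paper's: in both cases one uses $\int_{\Gamma(s)}E\,dA=0$ (respectively $\int_0^{2\pi}\cos\theta\,d\theta=0$) to replace $\Delta\phi$ by $\Delta\phi-\overline{\overline{\Delta\phi}}$ (respectively $\phi_t$ by $(\phi-\bar{\bar\phi})_t$), and then Cauchy--Schwarz, the lower bound on $A$, and for the wall term the trace mapping $H^1(\Omega)\to H^{1/2}(\Gamma)\subset L^2(\Gamma)$. Those two estimates are complete and correct.

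The gap is exactly where you flagged it: the gradient term is never actually estimated, so as written the proposal does not prove \eqref{HPropEq1}. You exhibit a nonvanishing longitudinal residual proportional to $\kappa\kappa'\,\bar\phi_s$ and then only name two possible repairs (a cancellation against the $E\,\Delta\phi$ term, or a cross-sectional integration by parts), carrying out neither. The paper closes this step directly and without any residual: by the gradient formula of \cite[Section~2]{L-M:WECAD} it takes $\nabla(\Xi^{-1})\cdot\bt=-r\kappa'(s)\cos\theta$ and $\nabla\bar{\bar\phi}=\bt\,\Xi\,\bar{\bar\phi}_s$, so in the integrand $\Xi^{-1}\,\nabla(\Xi^{-1})\cdot\nabla\bar{\bar\phi}$ the factor $\Xi$ produced by $\nabla\bar{\bar\phi}$ cancels the prefactor $\Xi^{-1}$, leaving $-\kappa'(s)\,\bar{\bar\phi}_s\,r\cos\theta$, whose integral over the disk $\Gamma(s)$ vanishes identically. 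Hence $\int_{\Gamma(s)}\Xi^{-1}\nabla(\Xi^{-1})\cdot\nabla\bar{\bar\phi}\,dA=0$ for every $s$, one may replace $\nabla\phi$ by $\nabla(\phi-\bar{\bar\phi})$ in the first term with no remainder, and Cauchy--Schwarz with the weight $\Xi^{-1}dA$ together with the pointwise bound $\abs{\nabla(\Xi^{-1})}\le\max(\kappa,\kappa')(r+1)$ gives the $C_{\mathcal H,1}$-term. Your residual appears because you insert an extra factor $\Xi$ into the longitudinal component of $\nabla(\Xi^{-1})$; under the convention the paper quotes, the two $\Xi$'s cancel and the angular integral kills the term outright. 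Relatedly, your heuristic that this residual is what ``forces the square on $\norm{\max(\kappa,\kappa')}_{L^\infty}$'' is not how the constants arise: the bound \eqref{HPropEq1} carries only the first power of that norm.
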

\begin{proof}
Let us begin with the first term in $H$ in \eqref{ControlTermsH}.
Denoting by $\bar{\bar \phi}$ the extension given in
\eqref{DilationOfAverage}, we observe by using the gradient formula in
\cite[Section~2]{L-M:WECAD} that $\nabla \bar{\bar \phi} = \bt(s) \,
\Xi \frac{\partial \bar{\bar \phi}}{\partial s}$ and $\nabla \left (
\Xi^{-1} \right ) = - \bt(s) \, r \kappa'(s) \cos{\theta} - \bn(s)
\,\kappa(s)$. Thus, recalling that  $\bar{\bar \phi} = \bar{\bar \phi}(t,s)$, we get
\begin{equation*}
  \int_{\Gamma(s)} {\nabla \left( \frac{1}{\Xi}\right)\cdot\nabla \bar{\bar \phi}  \, \frac{dA}{\Xi} }
= - \kappa'(s) \frac{\partial \bar{\bar \phi}}{\partial s}  \int_{\Gamma(s)} {r \cos{\theta} \, dA} = 0
\end{equation*}
where $dA = r dr d \theta$.  Hence, we get by using H\"older's
inequality
\begin{align*}
  & \left \| \int_{\Gamma(s)} { \nabla \left( \frac{1}{\Xi}\right)\cdot\nabla \phi  
    \, \frac{dA}{\Xi}} \right \| _{L^2(0,1)}^2  = 
\left \| \int_{\Gamma(s)} { \nabla \left( \frac{1}{\Xi}\right)\cdot\nabla \left ( \phi - \bar{\bar \phi}\right )  
    \, \frac{dA}{\Xi}} \right \| _{L^2(0,1)}^2 
\\
    \leq & \int_0^1{\left (
      \int_{\Gamma(s)}{\abs{\nabla \left(\frac{1}{\Xi}\right)}^2 \frac{dA}{\Xi}}
      \cdot
      \int_{\Gamma(s)}{\abs{\nabla \left ( \phi - \bar{\bar \phi}\right )}^2\frac{dA}{\Xi}} 
      \right ) \,ds } \\ 
    \leq & \norm{\max{(\kappa, \kappa')}}^2_{L^\infty(0,1)} \,  C_{\mathcal H, 1}^2 
    \cdot \int_0^1{ \left (
      \int_{\Gamma(s)}{\abs{\nabla \left ( \phi - \bar{\bar \phi}\right ) }^2\frac{dA}{\Xi}}  
      \right ) } \, ds  \\
     = &  \norm{\max{(\kappa, \kappa')}}^2_{L^\infty(0,1)} C_{\mathcal H, 1}^2 \cdot \left \| \nabla \left ( \phi - \bar{\bar \phi}\right )  \right \|_{L^2(\Omega;\C^3)}^2
\end{align*}
where $C_{\mathcal H, 1} := \sup_{s \in [0,1]}\left(\int_{\Gamma(s)}{
  (r + 1)^2 \Xi^{-1} \,dA} \right)^{1/2}$,  since $dV
= \Xi^{-1} \, dA \, ds$ and $\abs{\nabla
  \left(\Xi^{-1}\right)} \leq \max{(\kappa, \kappa')} (r + 1)$.

Let us estimate next the last term in \eqref{ControlTermsH}. Because
the function ${\bar{\bar \phi}}_t \equiv \overline{\overline
  {\phi_t}}$ does not depend on $r$ and $\theta$ variables at all, we
have
\begin{equation*}
\begin{aligned}
  & \left \| \int_{0}^{2\pi}{
    \phi_t(t, \cdot,R(\cdot),\theta) \cos{\theta} d\theta } \right \|_{L^2(0,1)} \\
  & \leq 
  \left \| \int_{0}^{2\pi}{
    \left (\phi_t - {\bar{\bar \phi}}_t \right ) (t,\cdot,R(\cdot),\theta)
    \cos{\theta} \, d\theta } \right \|_{L^2(0,1)}
  +   \left \| {\bar{\bar \phi}}_t(t,\cdot) \cdot 
  \int_{0}^{2\pi}{\cos{\theta} \, d\theta } \right \|_{L^2(0,1)} \\
  & = \left \| \int_{0}^{2\pi}{
    \left (\phi_t - {\bar{\bar \phi}}_t \right ) (t,\cdot,R(\cdot),\theta)
    \cos{\theta} \, d\theta } \right \|_{L^2(0,1)}.
\end{aligned}
\end{equation*}
Since the surface element on tube wall $\Gamma$ is given by $dS = W(s)
\, d\theta \, ds$ by \cite[Section~2]{L-M:WECAD}, we get for all $t
\geq 0$
\begin{align*}
& \left \| \frac{\alpha W(\cdot)
    \eta(\cdot)}{A(\cdot)} \int_{0}^{2\pi}{ 
      \phi_ t (t, \cdot,R(\cdot),\theta) \cos{\theta} d\theta } \right \|_{L^2(0,1)}^2 \\
& \leq  \alpha^2  C_3^2 
\int_{0}^1{ 
\left |  \int_{0}^{2\pi}{ W(s)^{1/2} 
\left (\phi_t - {\bar{\bar \phi}}_t \right ) (t,s,R(s),\theta) \cos{\theta} \, d\theta } \right | ^2
    \, ds} \\
& \leq \pi \alpha^2 C_3^2
\int_{0}^1{ 
\int_{0}^{2\pi}{  \left | \left (\phi_t - {\bar{\bar \phi}}_t \right ) (t,s,R(s),\theta)\right |^2  W(s) \, d\theta } 
    \, ds} \\
& = \pi \alpha^2  C_3^2
\int_{\Gamma}{\left | \left ( \phi_t(t,\cdot) - {\bar{\bar \phi}}_t(t,\cdot) \right )\rst{\Gamma}  \right |^2  \, dS }  
= \pi \alpha^2 C_3^2 \left \|  \left ( \phi_t(t,\cdot) - {\bar{\bar \phi}}_t(t,\cdot) \right ) \rst{\Gamma} \right \|^2_{L^2(\Gamma)} \\
& \leq \pi \alpha^2 C_3^2 C_4^2 \left \|  \left ( \phi_t(t,\cdot) - {\bar{\bar \phi}}_t(t,\cdot) \right )\rst{\Gamma} \right \|^2_{H^{1/2}(\Gamma)}
\leq  \alpha^2 C_{\mathcal H , 2}^2 \left \| \left ( \phi - {\bar{\bar \phi}} \right )_t (t,\cdot) \right \|^2_{H^1(\Omega)}
\end{align*}
where $C_3 := \sup_{s \in [0,1]}{\frac{\eta(s)}{A(s)}}$, $C_{\mathcal
  H , 2} := \pi^{1/2} C_3 C_4 C_5$, and the constants $C_4, C_5$ are
the norms of the inclusion $H^{1/2}(\Gamma) \subset L^{2}(\Gamma)$ and
the trace mapping from $H^1(\Omega)$ into $H^{1/2}(\Gamma)$,
respectively.

It remains to treat the second term in $H$ in \eqref{ControlTermsH}.
We first observe that the error function $E = E(s,r,\theta)$
introduced in \eqref{SSCFError} averages to zero over each
intersectional surface $\Gamma(s)$. We have
\begin{equation*}
\begin{aligned}
  & \int_{\Gamma(s)}{E(s,r,\theta) \, dA} \\
  & = - 2 \kappa(s) \int_{\Gamma(s)}{r \cos{\theta} \, dA}
+ \kappa(s)^2 \int_{0}^{R(s)} \int_{0}^{2 \pi}  {\left (r^2 \cos^2{\theta} - \tfrac{R(s)^2}{4} \right ) \, r dr d \theta} \\
& = \kappa(s)^2 \left (  \int_{0}^{R(s)}{\, r^3 dr}  \int_{0}^{2 \pi} { \cos^2{\theta} \, d \theta} - \frac{R(s)^2}{4} \int_{0}^{R(s)}{\, r dr}  \int_{0}^{2 \pi} {d \theta}  \right ) \\
& = \kappa(s)^2 \left ( \frac{1}{4} R(s)^4 \cdot \int_{0}^{2 \pi} { \cos^2{\theta} \, d \theta}  - \frac{R(s)^2}{4} \cdot \frac{1}{2} R(s)^2 \cdot 2 \pi \right )= 0
\end{aligned}
\end{equation*}
because $\int_{0}^{2 \pi} { \cos^2{\theta} \, d \theta} = \pi$.
Considering now the second time derivative $\bar{\bar \phi}_{tt} =
\overline{\overline{\phi_{tt}}}$ of $\bar{ \bar \phi}$ in
\eqref{DilationOfAverage}, we see that also $\bar{\bar \phi}_{tt}$
does not depend on the variables $r$ and $\theta$ at all. 
Recalling that $\phi$ satisfies the wave
equation $\Delta\phi = c^{-2}\phi_{tt}$, we get 
\begin{equation*}
  c^2 \int_{\Gamma(s)}{E \,  \overline{\overline{\Delta \phi}} (t,s)   \, dA} =
  \int_{\Gamma(s)}{E \, \bar{\bar \phi}_{tt}(t,s)   \, dA} = \bar{\bar \phi}_{tt}(t,s) 
  \int_{\Gamma(s)}{ E  \, dA} = 0
\end{equation*}
for all $s \in [0,1]$. Hence, we get the estimate
\begin{align*}
 &  \left \| \frac{1}{A(s)}\int_{\Gamma(s)}{ E \Delta\phi \, dA}  \right \|_{L^2(0,1)}  =
  \left \| \frac{1}{A(s)}\int_{\Gamma(s)}{ E \cdot \left ( \Delta\phi - \overline{\overline{\Delta \phi}}   \right ) \, dA} \right \|_{L^2(0,1)} \\
&  =  \norm{\kappa}_{L^\infty(0,1)} \left \|
\mathcal{A} \left ( \frac{E}{\kappa} \cdot \left ( \Delta\phi - \overline{\overline{\Delta \phi}}   \right ) \right  )\right \|_{L^2(0,1)} 
 \leq  \norm{\kappa}_{L^\infty(0,1)} C_{\mathcal H, 3}  \,  \left \| \Delta\phi - \overline{\overline{\Delta \phi}}  \right \|_ {L^2(\Omega)}
\end{align*}
where $C_{\mathcal H, 3} :=
\norm{\mathcal{A}}_{\BLO(L^2(\Omega);L^2(0,1))}
\norm{E/\kappa}_{L^\infty(\Omega)}$; the boundedness of $\mathcal{A}$
is by \cite[Propositions~5.2]{L-M:WECAD}.
\end{proof}



\end{document}